\newenvironment{abib}
  {\bibdiv\biblist\setupbib}
  {\endbiblist\endbibdiv}
  \def\setupbib{\catcode`@=\active}
\def\gatherkey#1#2{\gatherkeyaux{#1}#2\gatherkeyaux}
\def\gatherkeyaux#1#2,#3\gatherkeyaux{\bib{#2}{#1}{#3}}
\numberwithin{equation}{section}
\newtheorem{theorem}{Theorem}[section]
\newtheorem{lemma}[theorem]{Lemma}
\newtheorem{proposition}[theorem]{Proposition}
\newtheorem*{exe}{Theorem 1.5}
\newtheorem{remarks}[theorem]{Remarks}
\newtheorem{remark}[theorem]{Remark}
\def\bals#1\nals{\begin{align*}#1\end{align*}}
\def\bal#1\nal{\begin{align}#1\end{align}}
\newcommand{\sqr}[2]{{\vcenter{\vbox{\hrule height#2pt
                \hbox{\vrule width#2pt height#1pt \kern#1pt
                \vrule width#2pt}\hrule height#2pt}}}}
\newcommand{\beq}{\begin{equation}}
\newcommand{\eeq}{\end{equation}}
\newcommand{\beqar}{\begin{eqnarray}}
\newcommand{\eeqar}{\end{eqnarray}}
\def\beqars{\begin{eqnarray*}}
\def\eeqars{\end{eqnarray*}}
\newcommand{\pmd}{\hspace{-3mm} \pmod}
\def \ds{\displaystyle}
\newcommand{\nn}{\mathbb{N}}
\newcommand{\zz}{\mathbb{Z}}
\newcommand{\hh}{\mathbb{H}}
\begin{document}

\markboth{A. Akbary \& Z. S. Aygin}
{Modular Equations}

\author{Amir Akbary}

\address{Department of Mathematics and Computer Science, University of Lethbridge, Lethbridge, AB T1K 3M4, Canada}%
\email{amir.akbary@uleth.ca}%

\author{Zafer Selcuk Aygin}

\address{Department of Mathematics and Statistics, University of Calgary, Calgary, AB T2N 1N4, Canada}%
\email{selcukaygin@gmail.com}%

\subjclass[2010]{11F11, 11E25, 11F27} 

\thanks{Research of the first author is partially supported by NSERC. Research of Z.S. Aygin is supported by a Pacific Institute for the Mathematical Sciences postdoctoral fellowship.}



\title{Sums of triangular numbers and sums of squares}






\maketitle

\begin{abstract}
For non-negative integers $a,b,$ and $n$, let $N(a, b; n)$ be the number of representations of $n$ as a sum of squares with coefficients $1$ or $3$ ($a$ of ones and $b$ of threes). Let $N^*(a,b; n)$ be the number of representations of $n$ as a sum of odd squares with coefficients $1$ or $3$ ($a$ of ones and $b$ of threes). We have that $N^*(a,b;8n+a+3b)$
is the number of representations of $n$ as a sum of triangular numbers with coefficients $1$ or $3$ ($a$ of ones and $b$ of threes).
It is known that for $a$ and $b$ satisfying $1\leq a+3b \leq 7$, we have
$$
 N^*(a,b;8n+a+3b)= \frac{2}{2+{a\choose4}+ab} N(a,b;8n+a+3b)
$$
and for $a$ and $b$ satisfying $a+3b=8$, we have
$$
N^*(a,b;8n+a+3b) = \frac{2}{2+{a\choose4}+ab}  \left( N(a,b;8n+a+3b) - N(a,b; (8n+a+3b)/4) \right).
$$
%
Such identities are not known for $a+3b>8$.  
In this paper, for general $a$ and $b$ with $a+b$ even, we prove asymptotic equivalence of formulas similar to the above, 
as $n\rightarrow\infty$.
One of our main results extends a theorem of Bateman, Datskovsky, and Knopp where the case $b=0$ and general $a$ was considered. Our approach is different from Bateman-Datskovsky-Knopp's proof where the circle method and singular series were used. We achieve our results by explicitly computing the Eisenstein components of
the generating functions of $N^*(a,b;8n+a+3b)$ and $N(a,b;8n+a+3b)$. The method we use is robust and can be adapted in studying the asymptotics of other representation numbers with general coefficients.
\end{abstract}


\section{Introduction and main results}

For $a,b,n \in \nn_0=\nn \cup \{0\}$, with $(a, b)\neq (0, 0)$, let 
\bals
N(a,b; n):=\# \{ (x_1,\ldots,x_a,y_1,\ldots, y_b) \in \zz^{a+b}; n=x_1^2 + \cdots + x_a^2 + 3y_1^2+ \cdots + 3y_b^2 \}
\nals
and
let $N^*(a, b; n)$ be the numbers of representations of $n$ by the quadratic form 
$$x_1^2 + \cdots+ x_a^2 + 3y_1^2+ \cdots + 3y_b^2$$
under the extra condition that $x_i$'s and $y_j$'s are odd integers. We let $N(a,b;n)$ and $N^*(a,b; n)=0$ if $n \not\in \nn_0$. Observe that
\begin{multline*}
N^*(a,b;8n+a+3b):= \# \left\{ (x_1,\ldots,x_a,y_1,\ldots, y_b) \in \zz^{a+b}; ~ \right.\\
 n=  \frac{x_1(x_1-1)}{2} + \cdots + \frac{x_a(x_a-1)}{2} 
\left. + 3\frac{y_1(y_1-1)}{2}+ \cdots + 3\frac{y_b(y_b-1)}{2} \right\}.
\end{multline*}
Thus, $N^*(a, b; 8n+a+3b)$ is the number of representations of $n$ as a sum of triangular numbers with coefficients $1$ or $3$ ($a$ of ones and $b$ of threes).  In this paper we are inspired by the existing results, in \cite{ ACH}, \cite{C4}, \cite{C7}, \cite{C12}, \cite{C10}, and \cite{SunPaper},  to examine the ratio of $N^*(a,b;8n+a+3b)$ to $N(a,b;8n+a+3b)$.

In \cite{C10}, Bateman and Knopp, using elementary observations and Jacobi's four square theorem, showed that for $1\leq a \leq 7$ we have
\bal
N^*(a,0;8n+a)= \frac{2}{2+{a\choose4}} N(a,0;8n+a). \label{eq31_1}
\nal
This result was rediscovered independently in 2002 by Barrucand, Cooper and Hirschhorn \cite{BCH} and proved by employing generating functions. A combinatorial proof is given in 2004 by Cooper and Hirschhorn in \cite{CH}. 

In \cite{C12}, Bateman, Datskovsky and Knopp revisited the identity \eqref{eq31_1} from a different point of view. From the Hardy-Littlewood circle method
it is known, for $3\leq a \leq 8$, that
\begin{equation}
\label{circle}
N(a, 0; n)=\frac{\pi^{a/2}}{\Gamma(a/2)} n^{a/2-1}\sum_{k=1}^{\infty} A_k^{(a)} (n),
\end{equation}
where the series in the above identity is the so called \emph{singular series} (see \cite[page 69]{C12})  for the definition of $A_k^{(a)} (n)$). A similar formula also holds for $N^*(a, 0; 8n+a+3b)$. By employing the multiplicativity of the singular series and computing the local component at $2$, in \cite{C12} it is shown that
\begin{equation}
\label{BDK}
\frac{N^*(a,0;8n+a)}{N(a,0;8n+a)}=\begin{cases} \frac{2}{2^{a-2} + 2^{(a-2)/2} \cos(\pi a/4)  + 1 }&if~a=3, 4, 5, 6, 7,\\ \frac{7\cdot 2^{3\nu}}{256\cdot 2^{3\nu}-480} &if~a=8, \end{cases}
\end{equation}
where $2^\nu$ is the largest power of $2$ dividing $8n+8$ (see \cite[Formulas (6.5) and (6.7)]{C12}). Observe that the formula in \eqref{BDK} for $3\leq a \leq 7$ coincides with \eqref{eq31_1}.
In fact, $$\frac{2}{2^{a-2} + 2^{(a-2)/2} \cos(\pi a/4)  + 1 }= \frac{2}{2+{a\choose4}},$$
for $1\leq a \leq  7$, however the left-hand side is strictly smaller than the right-hand side for $a\geq 8$. 
On the other hand the appearance of $\nu$ in \eqref{BDK},  for the case $a=8$, shows that the ratio ${N^*(8, 0; 8n+8)}/{N(8, 0; 8n+8)}$ is not a constant for all positive integer $n$. More generally, in \cite{C12} by employing the theory of modular forms of weight $a/2$ and appropriate multiplier system on the group $\Gamma_0(64)$, it is shown that the ratio ${N^*(a, 0; 8n+a)}/{N(a, 0; 8n+a)}$ is never constant for $a>7$, neither there exists a non-negative integer $M$ such that ${N^*(a, 0; 8n+a)}/{N(a, 0; 8n+a)}$ 
stays constant
for all $n\geq M$.
Interestingly though, for $a>8$, the asymptotic identity  
\begin{equation*}
N(a, 0; n)=\frac{\pi^{a/2}}{\Gamma(a/2)} n^{a/2-1}\sum_{k=1}^{\infty} A_k^{(a)} (n)+O(n^{a/4})
\end{equation*}
holds, where the main term is the same function as the right-hand side of \eqref{circle} (see \cite[formula (6.1)]{C12}). Now the computation of the local component of the singular series at prime $2$ reveals that for $a>8$ where $8\nmid a$, relations similar to \eqref{BDK} 
hold at the limit. 
More precisely, Theorem 3 in \cite{C12} states that, for each $a > 8$ where $a \not\equiv 0 \pmod{8}$,
\bal
\lim_{n \rightarrow \infty} \frac{N^*(a,0;8n+a)}{N(a,0;8n+a)} = \frac{2}{2^{a-2} + 2^{(a-2)/2} \cos(\pi a/4)  + 1 } \label{eq10_8}
\nal
and notes that
\bal
\lim_{n \rightarrow \infty} \frac{N^*(a,0;8n+a)}{N(a,0;8n+a)} \label{eq10_1}
\nal
does not exist when $a \equiv 0 \pmod{8}$.

In \cite{ACH}, Adiga, Cooper and Han considered the more general problem of the relation between representation numbers $N( {\lambda}; 8n+\lambda_1+\cdots+\lambda_m)$ and $N^*({\bf \lambda}; 8n+\lambda_1+\cdots+\lambda_m)$ in which ${\bf \lambda}=(\lambda_1, \cdots, \lambda_m)$ corresponds to a partition of a positive integer not exceeding $7$, $N({\bf \lambda}; n)$ denotes the number of representations of $n$ by the quadratic form $\lambda_1x_1^2+\cdots+\lambda_m x_m^2$, and $N^*({\bf \lambda}; n)$ is defined similarly with extra condition that $x_i$'s are odd. Their result in our special case  $\lambda_i\in \{1, 3\}$ states that for non-negative integers $a$ and $b$ satisfying $1\leq a+3b \leq 7$, we have
\bal
& N^*(a,b;{8n+a+3b}) = \frac{2}{2+{a\choose4}+ab} N(a,b;8n+a+3b).\label{eq31_3}
\nal

The cases corresponding to partitions of $8$ for $N^*$ and for a related function to $N$ (denoted by $\widetilde{N}$) were considered by Baruah, Cooper and Hirschhorn in \cite{C4}. 
Let $\widetilde{N}(a, b; n)$ be the number of representations of the integer $n$ as a sum of squares with coefficients $1$ or $3$ ($a$ of ones and $b$ of threes) and at least one odd component. We see that 
\begin{equation}
\label{n-tilden}
\widetilde{N}(a, b; 8n+a+3b)= N(a, b; 8n+a+3b)-N(a, b; (8n+a+3b)/4).
\end{equation}
Since 
$\widetilde{N}(a, b; 8n+a+3b)=N(a, b; 8n+a+3b)$ 
if $a+3b$ is not a multiple of $4$, we use the notation $\widetilde{N}(a, b; 8n+a+3b)$ only if $a+3b \equiv 0$ (mod~4).
In the special case $\lambda_i\in \{1, 3\}$, Theorem 1.4 of \cite{C4} implies that 
for non-negative integers $a$ and $b$ satisfying $a+3b=8$, we have
\bal
& N^*(a,b;{8n+a+3b}) = \frac{2}{2+{a\choose4}+ab}  \widetilde{N}(a, b; 8n+a+3b).
\label{eq31_4}
\nal

The above kind of exact identities between $N^*$ and $N$ or between $N^*$ and $\widetilde{N}$  are very rare. In fact, our first result in this paper proves that the above identities together with the ones listed in Lemma \ref{identities} are the only instances of such relations. 

\begin{theorem}
\label{alp_rels3-0}
Let $a, b\in \mathbb{N}_0$ be such that $a+b\equiv 0~({\rm mod}~2)$. The following statements hold.

\begin{itemize}
\item[(i)] We have
\begin{equation}
\label{NstarN}
N^*(a, b; 8n+a+3b)=
C_{a, b}
N(a, b; 8n+a+3b)
\end{equation}
for any $n\in \mathbb{N}_0$ and for a rational constant $C_{a, b}$, depending on $a$ and $b$,  if and only if 
$$(a, b)\in S=\{(0, 2), (0,4), (0, 6), (1, 1), (1, 3), (2, 0), (3, 1), (4, 0), (6, 0)\}.$$

\item[(ii)] If $a+3b\equiv 0$ (mod $4$), then
$${N^*}(a, b; 8n+a+3b)=
\widetilde{C}_{a, b}
{\widetilde{N}}(a, b; 8n+a+3b)$$
for any $n\in \mathbb{N}_0$ and for a rational $\widetilde{C}_{a, b}$, depending on $a$ and $b$, if and only if 
$$(a, b)\in \widetilde{S}=\{(0, 4), (0, 8), (1,1), (1, 5), (2, 2), (4, 0), (5, 1), (8, 0)\}.$$

\end{itemize}

\end{theorem}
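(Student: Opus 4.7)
The natural framework is to view the two generating functions
\begin{equation*}
F_{a,b}(q) := \sum_{n\geq 0} N(a,b;8n+a+3b)\, q^n \quad \text{and} \quad F^*_{a,b}(q) := \sum_{n\geq 0} N^*(a,b;8n+a+3b)\, q^n
\end{equation*}
as modular forms of integer weight $k=(a+b)/2$ on some congruence subgroup $\Gamma_0(N)$ (the hypothesis $a+b\equiv 0 \, (\mathrm{mod}\, 2)$ is exactly what makes $k$ integral). These are obtained by applying an arithmetic-progression sieve to the theta products $\theta(q)^a\theta(q^3)^b$ and $\phi(q)^a\phi(q^3)^b$ (with $\phi(q)=\sum_{m\in\mathbb{Z}}q^{(2m+1)^2}$, etc.). Each admits a unique decomposition
\begin{equation*}
F_{a,b}=E_{a,b}+S_{a,b},\qquad F^*_{a,b}=E^*_{a,b}+S^*_{a,b}
\end{equation*}
inside $M_k(\Gamma_0(N))=\mathcal{E}_k(\Gamma_0(N))\oplus \mathcal{S}_k(\Gamma_0(N))$.

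For the ``if'' direction of (i), I would just quote \eqref{eq31_3} for the pairs in $S$ with $a+3b\leq 7$ and invoke Lemma \ref{identities} for the remaining members of $S$; the ``if'' direction of (ii) is handled by \eqref{eq31_4} together with Lemma \ref{identities}.

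For the ``only if'' direction of (i), suppose $F^*_{a,b}=C_{a,b}\,F_{a,b}$ as formal power series for some rational $C_{a,b}$. By uniqueness of the Eisenstein/cuspidal splitting this forces both $E^*_{a,b}=C_{a,b}\,E_{a,b}$ and $S^*_{a,b}=C_{a,b}\,S_{a,b}$; the first is the binding necessary condition. Using the explicit Eisenstein components $E_{a,b}$ and $E^*_{a,b}$ computed elsewhere in the paper, each is written in a fixed basis of $\mathcal{E}_k(\Gamma_0(N))$, so proportionality becomes a concrete linear condition on two vectors of basis coefficients that depend transparently on $a$ and $b$. I would then enumerate the pairs $(a,b)$ with $a+b\equiv 0\,(\mathrm{mod}\,2)$ satisfying this condition; the outcome will be exactly $S$, and the forced rational constant will match the one supplied by Lemma \ref{identities}, so the full identity does hold for those pairs. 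Part (ii) is parallel, after replacing $F_{a,b}$ by
\begin{equation*}
\widetilde{F}_{a,b}(q) := F_{a,b}(q)-F_{a,b}(q^4),
\end{equation*}
which is still a weight-$k$ form (on a possibly larger level) by \eqref{n-tilden}, so the same Eisenstein comparison applies and yields $\widetilde{S}$.

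The main obstacle is the explicit Eisenstein-side computation: one must identify the relevant cusps of $\Gamma_0(N)$, evaluate the constant terms of $F_{a,b}$ and $F^*_{a,b}$ at each cusp as explicit rational functions of $a$ and $b$, and then read off the Eisenstein coefficient vectors in a uniform basis. Once this is in hand, ruling out every $(a,b)\notin S$ (resp.\ $\notin \widetilde{S}$) reduces to checking a finite list of linear (non-)proportionality conditions. This is precisely the point at which the present explicit-Eisenstein method gains leverage over the Bateman--Datskovsky--Knopp circle-method/singular-series argument, which only handles the case $b=0$ and only asymptotically.
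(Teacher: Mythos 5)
Your ``if'' direction coincides with the paper's (quote \eqref{eq31_3}, \eqref{eq31_4}, and Lemma \ref{identities}), but your ``only if'' direction has a genuine gap at its very first step. The series $F_{a,b}$, equivalently $\sum_{n\geq 0}N(a,b;8n+a+3b)q^{8n+a+3b}$, is \emph{not} a modular form: the modular object is $f_N(z)=\sum_{m\equiv a+3b\,(8)}N(a,b;m)q^m$, which by \eqref{tn} differs from the generating function by the finite $q$-polynomial $P(q)=\sum_{0\leq m<a+3b,\ m\equiv a+3b\,(8)}N(a,b;m)q^m$. Hence the hypothesis $N^*=C_{a,b}N$ gives $f_{N^*}-C_{a,b}f_N=-C_{a,b}P$, and you cannot apply the Eisenstein/cuspidal splitting to conclude $E^*=C_{a,b}E$ and $S^*=C_{a,b}S$, because $P$ is not modular and has no such decomposition. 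The paper turns exactly this defect into the whole proof: the left side is a modular form of positive weight, the right side is a finite exponential sum, and a nonzero modular form has the real axis as a natural boundary (\cite[Lemma 3]{C12}), so $P$ must vanish; one then exhibits, for every $(a,b)\notin S$, a positive ``small'' representation number $N(a,b;s)$ with $s<a+3b$ in the right residue class (with separate reductions for $a=0$ via Lemma \ref{eight} and for $a=1$, $b=8k+3$ via $N(1,3;10)>0$), which is the contradiction.

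Even if you repair the splitting step, the Eisenstein comparison you propose cannot carry the argument on its own. Theorem \ref{alp_rels3}(i) shows that when $a+3b\not\equiv 0\pmod 8$ the coefficients $[8n+a+3b]f_{N^*}^E$ and $[8n+a+3b]f_N^E$ are proportional with a fixed constant for \emph{all} $n\geq 0$ and \emph{all} such $(a,b)$; so at every index $\geq a+3b$ the proportionality you want to refute holds automatically, and the only possible obstruction sits either in the finitely many coefficients $m<a+3b$ or in the cusp parts, neither of which your plan isolates. Finally, the set of pairs $(a,b)\notin S$ with $a+b$ even is infinite, and ``enumerate the pairs and check a finite list of linear conditions'' gives no mechanism for a uniform argument covering all of them; the paper's positivity-of-small-representation-numbers argument is precisely such a mechanism.
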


%

In view of Theorem \ref{alp_rels3-0},
it is natural to study these representation functions asymptotically, for fixed integer parameters $a$ and $b$, as $n\rightarrow \infty$.
%
We prove the following.

\begin{theorem} \label{mainth} Let $1< a \in \nn$ and $b \in \nn_0$ be such that $a+b \equiv 0 \pmod{2}$ and assume $a+b \geq 4$. The following assertions hold:
\begin{itemize}
\item[(i)] If $a+3b \not\equiv 0 \pmod{8}$, then
\bal
&\lim_{n \rightarrow \infty}  \frac{N^*(a,b;{8n+a+3b})}{N(a,b;8n+a+3b) }= {\frac{ 2}{\ds 2^{a+b-2} + (-1)^b 2^{(a+b-2)/2}\cos (\pi (a+3b)/4)  +1}.} \label{eq10_6}
\nal
\item[(ii)] Let $a+3b \equiv 0 \pmod{8}$ and let $\nu\geq 3$ be the largest power of $2$ in the prime decomposition of $8n+a+3b$. Then, for fixed $\nu$, 
$$\lim_{\substack{n\rightarrow \infty \\ 2^\nu \Vert 8n+a+3b}}  \frac{N^*(a,b;8n+a+3b)}{N(a,b;8n+a+3b)}=  {\frac{ 2\delta(a, b, \nu)}{\ds 2^{a+b-2} + (-1)^b 2^{(a+b-2)/2} }, } $$
where
\begin{equation}
\label{Delta}
\delta(a, b, \nu):=\frac{\ds \sum_{j=\nu-1}^{\nu} (-1)^{bj}2^{j(a+b-2)/2}}{\ds -2+\sum_{j=0}^{\nu} (-1)^{bj} 2^{j(a+b-2)/2}}.
\end{equation}
\item[(iii)] If $a+3b \equiv 0 \pmod{4}$, then
\bal
\lim_{n \rightarrow \infty}  \frac{N^*(a,b;{8n+a+3b})}{\widetilde{N}(a,b;8n+a+3b) } = \frac{ 2}{\ds  2^{a+b-2} +(-1)^b 2^{(a+b-2)/2}\cos (\pi (a+3b)/4)}.
\label{eq10_4} 
\nal

%
%

\end{itemize}
\end{theorem}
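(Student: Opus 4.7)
The plan is to realize the two generating series
$$F(q):=\sum_{n\geq 0} N(a,b;n)\,q^n = \varphi(q)^a\varphi(q^3)^b, \qquad F^*(q):=\sum_{n\geq 0} N^*(a,b;n)\,q^n = \Psi(q)^a\Psi(q^3)^b,$$
where $\varphi(q):=\sum_{x\in\zz}q^{x^2}$ and $\Psi(q):=\sum_{x\in 2\zz+1}q^{x^2}$, as modular forms of weight $k=(a+b)/2$ on a congruence subgroup of level dividing $24$, with a quadratic nebentypus when $b$ is odd. The hypotheses $a+b\equiv 0\pmod 2$ and $a+b\geq 4$ guarantee $k\in\zz$ and $k\geq 2$, so each of the two forms decomposes uniquely in the corresponding space $M_k=\mathcal{E}_k\oplus S_k$ as $F=E+C$ and $F^*=E^*+C^*$.

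The first step is to show that only the Eisenstein parts contribute to the limits in (i)--(iii). By the Deligne bound (Hecke's bound already suffices when $k\geq 3$), the $m$-th Fourier coefficient of any cusp form in $S_k$ is $O(m^{(k-1)/2+\varepsilon})$, whereas the $m$-th Fourier coefficients of $E$ and $E^*$ are of order $m^{k-1}$ times a twisted divisor sum bounded below uniformly along the arithmetic progression $m\equiv a+3b\pmod 8$. Since $k-1>(k-1)/2+\varepsilon$ for small $\varepsilon$, the cuspidal pieces are negligible and the theorem is reduced to an explicit computation of the $m$-th Fourier coefficients of $E$ and $E^*$ for $m=8n+a+3b$, followed by taking ratios.

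For that computation I would expand $E$ and $E^*$ in the standard basis of weight-$k$ Eisenstein series $E_{k,\chi_1,\chi_2}(d\tau)$, indexed by pairs of primitive Dirichlet characters $(\chi_1,\chi_2)$ with product equal to the nebentypus and $d$ running over divisors of $24/(\mathrm{cond}\,\chi_1\cdot\mathrm{cond}\,\chi_2)$. The $m$-th coefficient of each such series is a twisted divisor sum $\sum_{d\mid m}\chi_1(d)\chi_2(m/d)d^{k-1}$, so after collecting terms, the ratio of the $m$-th coefficients of $E^*$ and $E$ splits into a local Euler factor at $3$ and one at $2$. The $3$-adic factor is elementary and produces the sign $(-1)^b$ from $\Psi(q^3)$ versus $\varphi(q^3)$. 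The $2$-adic factor is where the quantity $2^{a+b-2}+(-1)^b 2^{(a+b-2)/2}\cos(\pi(a+3b)/4)+1$ appears: the contribution of odd divisors of $m$ gives the additive constant $1$, while the sum over the even divisors is a geometric progression whose two-term closed form, after simplification via the $\cos(\pi(a+3b)/4)$ identity, yields the remaining expression.

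The hardest step, and the main obstacle, is the explicit Eisenstein projection of $F^*$. Whereas $\varphi(q)^a\varphi(q^3)^b$ is classical on $\Gamma_0(12)$, the series $\Psi(q)$ naturally lives on $\Gamma_0(16)$, so identifying $E^*$ inside $\mathcal{E}_k(\Gamma_0(24),\chi)$ requires evaluating $F^*$ at each cusp of $\Gamma_0(24)$, in particular those lying above $2$, and matching these cusp values against those of the basis $\{E_{k,\chi_1,\chi_2}(d\tau)\}$. Once this linear-algebra problem is solved, part (i) follows immediately: when $a+3b\not\equiv 0\pmod 8$ the $2$-adic valuation $v_2(m)$ is a fixed small constant, the geometric sum at $2$ closes, and letting $n\to\infty$ produces \eqref{eq10_6}. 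Part (ii) is the refined version: when $a+3b\equiv 0\pmod 8$ one must separate contributions by the exact power $2^\nu\|m$, and grouping the top two terms of the truncated geometric sum at $2$ produces exactly the quantity $\delta(a,b,\nu)$ in \eqref{Delta}. Part (iii) follows from (i) combined with \eqref{n-tilden}: the Eisenstein coefficient of $F$ at $m/4$ is, by the same local calculation, precisely the contribution responsible for the additive constant $1$ in the denominator of \eqref{eq10_6}, so subtracting it cancels that term and yields \eqref{eq10_4}.
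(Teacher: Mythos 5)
Your overall strategy coincides with the paper's: decompose the two generating functions into Eisenstein plus cuspidal parts, compute the Eisenstein coefficients as explicit combinations of twisted divisor sums, analyze the local factors at $2$ and $3$ to get the stated constants, and kill the cuspidal contribution with the Hecke/Deligne bound. (Two small inaccuracies: since $\Psi(q)=\sum_{x\,\mathrm{odd}}q^{x^2}$ is supported on exponents $\equiv 1 \pmod 8$, the product $\Psi(q)^a\Psi(q^3)^b$ lives on $\Gamma_0(48)$, not on a group of level dividing $24$; and part (iii) cannot be deduced "from (i)" when $a+3b\equiv 0\pmod 8$, since (i) excludes that case --- it must come from the Eisenstein identity directly, as in the paper's Theorem 5.3(ii).)

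The genuine gap is in your first step, where you assert that the $m$-th Eisenstein coefficients are of order $m^{k-1}$ times a twisted divisor sum "bounded below uniformly" along the progression $m\equiv a+3b\pmod 8$. This is false as stated, and it is exactly the delicate point of the whole argument. First, even a single twisted divisor sum $\sigma_{k-1}(\epsilon,\psi;m)$ normalized by $m^{k-1}$ is not uniformly bounded below: the paper's Lemma 6.1 only gives $|\sigma_{k-1}(\epsilon,\psi;m)|\geq m^{k-1}(3/5)^{\omega(m)}$, which decays with $\omega(m)$, and one needs Robin's bound $\omega(m)\ll \log m/\log\log m$ to conclude that this still beats the cusp-form bound $O(m^{(k-1)/2+\epsilon})$. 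Second, and more seriously, the Eisenstein coefficient is a \emph{signed linear combination} of several such sums (see the paper's \eqref{est1}--\eqref{est8}), and this combination can vanish identically on infinitely many $m$: the paper shows it does so precisely when $a=0$ or $a=1$, which is why the hypothesis $1<a$ appears in the theorem. Your proposal never invokes $a>1$, so as written it would "prove" the statement in cases where the limit does not exist in the claimed form (the denominator $N^*$, resp.\ its Eisenstein part, degenerates). To close the gap you must factor the Eisenstein coefficient into its $2$-part, $3$-part, and part coprime to $6$, verify case by case that the resulting expression is nonzero exactly when $a>1$, and only then apply the lower bound of Lemma 6.1 together with Robin's estimate to get $n^{(a+b-1)/4}=o(\alpha_{8n+a+3b})$, which is what licenses discarding the cusp forms.
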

%
\begin{remarks}
{\em We observe that  for non-negative integers $a$ and $b$ with $1\leq a+3b\leq 7$, the right-hand side of \eqref{eq10_6} is equal to $2/(2+{a\choose4}+ab)$. Thus, part (i) of Theorem \ref{mainth} provides an asymptotic extension of \eqref{eq31_3} for $a+b$ even and $a+3b \not\equiv 0 \pmod{8}$. In addition, it generalizes \eqref{eq10_8} for even $a$ and $b=0$ to general $a$ and $b$ with $a+b$ even.
Also,  for non-negative integers $a$ and $b$ with $a+3b=8$, the right-hand side of \eqref{eq10_4} is equal to $2/(2+{a\choose4}+ab)$. Thus,  part (iii) of Theorem \ref{mainth} provides an asymptotic extension of \eqref{eq31_4} for $a+b$ even and $a+3b \equiv 0 \pmod{4}$.
}

\end{remarks}

Let $a$ and $b$ be non-negative integers such that $a+b\geq 4$ is even. In this paper we also prove that the results described above for $4\leq a+3b \leq 8$ and the asymptotic results of Theorem \ref{mainth} are all consequence of a more general theorem (Theorem \ref{alp_rels3}) that can be described by modular interpretations of the generating functions of $N$, $N^*$, and $\widetilde{N}$.  In order to describe this general result we need some concepts and notations from the theory of modular forms (see Section \ref{modeqs} for references and information on modular forms).  Let $M_k(\Gamma_0(N), \chi)$ be the space of modular forms of weight $k$, level $N$, and character $\chi$. The following proposition demonstrates that the generating functions of $N(a, b; 8n+a+3b)$, $N^*(a, b; 8n+a+3b)$, and $\widetilde{N}(a, b; 8n+a+3b)$ are intimately related to functions in $M_k(\Gamma_0(N), \chi)$.

\begin{proposition}
\label{gen}
Let $a, b \in \mathbb{N}_0$ be such that $a+b \equiv 0~({\rm mod}~2)$. Let $q=e^{2\pi i z}$, where $z$ is in the upper half-plane. Then the following assertions hold.

(i) There are positive integers $k$, $N$, and a character $\chi$, all depending only on $a$ and $b$, such that 
\begin{equation}
\label{tns}
f_{N^*}(z):= \sum_{n=0}^{\infty} N^*(a, b; 8n+a+3b) q^{8n+a+3b}\in M_k(\Gamma_0(N), \chi).
\end{equation} 

(ii) Let 
\begin{equation*}
f_{N}(z):=\sum_ {\substack{ m=0 \\ m \equiv a+3b~({\rm mod}~8)}}^{\infty} N(a, b; m) q^{m}.
\end{equation*}
Then $f_{N}(z)\in M_k(\Gamma_0(N), \chi)$ for certain positive integers $k$, $N$, and a character $\chi$, all depending only on $a$ and $b$. Moreover, 
\begin{equation}
\label{tn}
\sum_{n=0}^{\infty} N(a, b; 8n+a+3b) q^{8n+a+3b}=f_N(z)-\sum_ {\substack{ \ell\in \mathbb{Z}\\0\leq 8\ell+a+3b<a+3b}} N(a, b; 8\ell+a+3b) q^{8\ell+a+3b}.
\end{equation}

(iii)  An analogous statement to (ii) holds for $\widetilde{N}$ by replacing $N$ with $\widetilde{N}$ in (ii).

\end{proposition}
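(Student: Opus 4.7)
My plan is to identify each generating function with a theta product (possibly sieved onto an arithmetic progression), and then invoke the standard modularity theorems for such objects. Throughout, set $\theta(z) := \sum_{n \in \mathbb{Z}} q^{n^2}$ and $\Theta(z) := \theta(z)^a \theta(3z)^b$. Since $a+b$ is even, $\Theta$ has integral weight $(a+b)/2$, and the usual theory of theta series of positive-definite rational quadratic forms (together with $\theta(Nz)$ being a weight-$1/2$ form on $\Gamma_0(4N)$ with the theta multiplier) places $\Theta$ in $M_{(a+b)/2}(\Gamma_0(M), \psi)$ for some level $M$ (divisible by $12$) and character $\psi$ depending only on $a,b$.

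For part (i), the key observation is that every odd square is $\equiv 1 \pmod 8$, so each nonzero term contributing to $N^*(a,b;m)$ forces $m \equiv a+3b \pmod 8$ and $m \geq a+3b$. Hence $\sum_m N^*(a,b;m)\,q^m$ is automatically supported on $m = 8n+a+3b$ with $n \geq 0$, and I would compute
\begin{align*}
f_{N^*}(z) = \bigl(\theta(z) - \theta(4z)\bigr)^a \bigl(\theta(3z) - \theta(12z)\bigr)^b,
\end{align*}
using $\theta(z) - \theta(4z) = \sum_{x \text{ odd}} q^{x^2}$ and the analogous formula for $\sum_{y \text{ odd}} q^{3y^2}$. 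Since $a+b$ is even, the right-hand side is a product of shifted theta functions of integral total weight, and the same theory yields $f_{N^*} \in M_{(a+b)/2}(\Gamma_0(N), \chi)$ for appropriate $N$ and $\chi$.

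For part (ii), to extract the residue class $m \equiv a+3b \pmod 8$ from $\Theta(z) = \sum_m N(a,b;m)\,q^m$, I would apply the standard averaging operator
\begin{align*}
f_N(z) = \frac{1}{8} \sum_{j=0}^{7} e^{-2\pi i j (a+3b)/8}\, \Theta\!\left(z + \tfrac{j}{8}\right).
\end{align*}
Each translate $\Theta(z+j/8)$ is modular on the conjugate of $\Gamma_0(M)$ by the translation $\bigl(\begin{smallmatrix}1 & j/8 \\ 0 & 1\end{smallmatrix}\bigr)$, and these conjugates jointly contain $\Gamma_0(N)$ for $N = \mathrm{lcm}(M,64)$, so $f_N$ lies in $M_{(a+b)/2}(\Gamma_0(N), \chi)$ for a suitable $\chi$. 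Identity \eqref{tn} is then essentially bookkeeping: $f_N$ is supported on all $m \geq 0$ with $m \equiv a+3b \pmod 8$, while the target sum runs over $m \geq a+3b$; the subtracted sum accounts for the finitely many exponents with $\ell < 0$ and $8\ell+a+3b \geq 0$.

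Part (iii) reduces to (ii) via \eqref{n-tilden}. Writing $a+3b = 4c$, the joint conditions $m \equiv a+3b \pmod 8$ and $4 \mid m$ force $m/4 \equiv c \pmod 2$, so
\begin{align*}
\sum_{m \equiv a+3b\,({\rm mod}\,8)} \widetilde{N}(a,b;m)\, q^m = f_N(z) - g(4z),
\end{align*}
where $g$ is the analogous sieve of $\Theta$ onto the progression $m' \equiv c \pmod 2$. Since $g$ is modular by the same averaging argument as in (ii) and $z \mapsto 4z$ raises the level by a factor of $4$, the left-hand side lies in $M_{(a+b)/2}(\Gamma_0(N''),\chi'')$ for appropriate $N'',\chi''$, and the initial-term correction proceeds exactly as in (ii). I expect the only real work to be combinatorial: tracking how the level and character compose under the sieve and under $z \mapsto 4z$, and verifying that the outputs depend only on $a,b$ and not on $n$. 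This is routine slash-action bookkeeping rather than a conceptual obstacle, but it is the place where the explicit shape of $(N,\chi)$—needed for the later Eisenstein decomposition—will be pinned down.
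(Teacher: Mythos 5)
Your proposal is correct and follows the same skeleton as the paper---part (i) by recognizing the generating function of $N^*$ as an honest modular form, part (ii) by sieving $\varphi^a(z)\varphi^b(3z)$ onto the progression $m\equiv a+3b \pmod 8$, and part (iii) by combining (ii) with \eqref{n-tilden}---but the two implementations of the sieve differ in a way worth recording. The paper extracts the progression by writing $f_N$ as a linear combination $\frac{1}{\phi(8)}\sum_{\omega}\bar\omega(a+3b)g_\omega^{a,b}$ of multiplicative twists over Dirichlet characters $\omega$ modulo $8$ and invoking \cite[Proposition 10.3.18]{cohenbook}; since $a+b$ even forces $a+3b$ to be even, every $\omega$ vanishes at $a+3b$, so the orthogonality identity as written only detects classes coprime to $8$ and requires a small repair. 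Your averaging over the translates $\Theta(z+j/8)$ is the additive-twist version of the same operator: it isolates an arbitrary residue class, even ones included, and its modularity is the standard fact that restricting a form of level $M$ to a progression modulo $t$ lands in level $Mt^2$ (your $\mathrm{lcm}(M,64)$ is slightly optimistic---the paper uses $12\cdot 8^2$---but since the proposition only asserts the existence of some $N$, this is immaterial). In part (i) your identity $f_{N^*}=(\theta(z)-\theta(4z))^a(\theta(3z)-\theta(12z))^b$ is the same function the paper writes as $2^{a+b}\Psi_8^a(z)\Psi_8^b(3z)$, because $2\Psi(8z)=\theta(z)-\theta(4z)$; the paper derives its modularity from the eta-quotient expressions \eqref{etaquotient} and \cite[Propositions 5.9.2 and 5.9.3]{cohenbook}, which spares one the half-integral-weight multiplier bookkeeping you would otherwise have to track. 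Net effect: your route is sound, marginally more robust on the sieving step, and loses nothing needed later, since the explicit data entering the Eisenstein decomposition are pinned down in Section \ref{sectionfour} by other means.
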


For $f\in M_k(\Gamma_0(N), \chi)$, let $f^E$ be the Eisenstein component of $f$ (see Section \ref{modeqs} for the definition) and let $[n]f$ denote  the the $n$-th Fourier coefficient of the Fourier expansion of $f$ at $i\infty$. The following result unifies and generalizes the above known identities on $N$, $N^*$, and $\widetilde{N}$.

\begin{theorem}
\label{alp_rels3}
Let $a, b\in \mathbb{N}_0$ be such that $a+b\geq 4$ and $a+b\equiv 0~({\rm mod}~2)$. The following statements hold.
\begin{itemize}
\item[(i)] If $a+3b \not\equiv 0 \pmod{8}$, then, for all integers $n\geq 0$,
$$[8n+a+3b]f_{N^*}^E=
\frac{ 2}{\ds 2^{a+b-2} + (-1)^b 2^{(a+b-2)/2}\cos (\pi (a+3b)/4)  +1} [8n+a+3b]f_{N}^E.$$
%
\item[(ii)] If $a+3b \equiv 0 \pmod{8}$, then, for all integers $n\geq 0$ for which $2^\nu~\Vert~8n+a+3b$, 
$$[8n+a+3b]f_{N^*}^E=
\frac{ 2\delta(a, b, \nu)}{\ds 2^{a+b-2} + (-1)^b 2^{(a+b-2)/2}  } [8n+a+3b]f_{{N}}^E,$$
where $\delta(a, b, \nu)$ is given in \eqref{Delta}.
\item[(iii)] If $a+3b \equiv 0 \pmod{4}$, then, for all integers $n\geq 0$,
$$[8n+a+3b]f_{N^*}^E=
\frac{ 2}{\ds 2^{a+b-2} + (-1)^b 2^{(a+b-2)/2}\cos (\pi (a+3b)/4)  } [8n+a+3b]f_{\widetilde{N}}^E.$$
\end{itemize}

\end{theorem}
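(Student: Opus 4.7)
The plan is to apply Proposition \ref{gen} and then carry out an explicit Eisenstein-series decomposition inside the resulting space $M_k(\Gamma_0(N),\chi)$. First, I would read off the weight $k=(a+b)/2$, the level $N$, and the character $\chi$ from Proposition \ref{gen}, and write down a standard basis of the Eisenstein subspace $E_k(\Gamma_0(N),\chi)$: the level-raised Eisenstein series $E_{k,\psi_1,\psi_2}(mz)$ indexed by pairs of primitive Dirichlet characters with $\psi_1\psi_2=\chi$ and by positive integers $m$ dividing an appropriate conductor quotient of $N$. The Fourier coefficients of these basis elements are standard twisted divisor sums of the form $\sum_{d\mid n}\psi_1(n/d)\psi_2(d)d^{k-1}$, available in closed form.

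Next, I would decompose $f_{N^*}^E$, $f_N^E$, and $f_{\widetilde{N}}^E$ in this basis. Because the three generating functions arise as products of explicit theta series in the variables $q$ and $q^3$ (together with the shift encoded in Proposition \ref{gen}), their $q$-expansions are completely transparent, and the coefficients in the Eisenstein decomposition can be pinned down either by matching sufficiently many low-order Fourier coefficients against the finite-dimensional Eisenstein basis, or by recognizing each $f^E$ as a specific combination suggested by the shape of the underlying theta product and the structure of the Atkin-Lehner/oldform operators on $E_k(\Gamma_0(N),\chi)$. Reading off the $(8n+a+3b)$-th Fourier coefficient then yields an explicit arithmetic expression in $n$.

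Finally, I would compare $[8n+a+3b]f_{N^*}^E$ with $[8n+a+3b]f_N^E$ and with $[8n+a+3b]f_{\widetilde{N}}^E$. Since $E_{k,\psi_1,\psi_2}(mz)$ contributes to the $(8n+a+3b)$-th coefficient only when $m\mid 8n+a+3b$, the residue of $a+3b$ modulo $8$ and the $2$-adic valuation $\nu=v_2(8n+a+3b)$ determine which basis elements participate. When $a+3b\not\equiv 0\pmod 8$, the valuation $\nu$ is forced and constant (equal to $v_2(a+3b)\in\{0,1,2\}$), so only finitely many basis elements survive and the ratio of coefficients collapses to the closed-form constant of part (i). When $a+3b\equiv 0\pmod 8$, the valuation $\nu\geq 3$ is free, and the level-raised contributions indexed by $m=2^j$ for $0\leq j\leq \nu$ combine into a finite geometric-type sum that collapses to precisely the expression $\delta(a,b,\nu)$ of \eqref{Delta}, giving part (ii). Part (iii) follows by feeding \eqref{n-tilden} into the expressions obtained in (i) and (ii) and simplifying. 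The main obstacle is the $2$-adic bookkeeping: one must identify the correct Eisenstein coefficients of $f_{N^*}^E$ and $f_N^E$ at each scale $m=2^j$ and then recognize the resulting alternating geometric sum in $j$ as \eqref{Delta}, which is what forces the three separate cases.
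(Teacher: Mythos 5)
Your overall architecture matches the paper's: decompose the relevant forms into the standard twisted-divisor-sum Eisenstein basis, then compare the $(8n+a+3b)$-th coefficients by isolating the $2$-part of the divisor sums (the odd parts cancel in the ratio), with the residue of $a+3b$ modulo $8$ governing which scales $m=2^j$ contribute and producing $\delta(a,b,\nu)$ as a ratio of geometric sums. However, there is a genuine gap at the crucial step: you never give a workable method for determining the Eisenstein components \emph{uniformly in $a$ and $b$}. ``Matching sufficiently many low-order Fourier coefficients against the finite-dimensional Eisenstein basis'' is a finite computation for each fixed pair $(a,b)$, but the theorem is a statement for all $a,b$ with $a+b\geq 4$ even, and the dimension and the basis change with $(a,b)$; your fallback (``recognizing the combination from the shape of the theta product and Atkin--Lehner operators'') is not an argument. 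The paper's Propositions \ref{thchi1}--\ref{thchi12} supply exactly this missing input: writing $\Psi_8^a(z)\Psi_8^b(3z)$ and $\varphi^a(z)\varphi^b(3z)$ as eta quotients and computing their constant terms at every cusp of $\Gamma_0(48)$ and $\Gamma_0(12)$ in closed form as functions of $a$ and $b$ (via \cite[Proposition 2.1]{Kohler}), then matching against the known constant terms of the basis Eisenstein series. Since Eisenstein series are determined by their constant terms at the cusps, this pins down the coefficients $a_{i,j}$, $b_{i,j}$ for all $a,b$ simultaneously, split into four cases by the parities of $a,b$ and the class of $a+b$ bmod $4$ (characters $\chi_1,\chi_{-3},\chi_{-4},\chi_{12}$) --- a case division your sketch does not confront.

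Two smaller points. First, your derivation of (iii) from (i) and (ii) via \eqref{n-tilden} is the reverse of the paper's logic and is not just ``simplifying'': you need the ratio $[(8n+a+3b)/4]f_N^E\,/\,[8n+a+3b]f_N^E$ in closed form (the paper's Lemma \ref{lem_b}), and when $a+3b\equiv 0\pmod 8$ this ratio depends on $\nu$, so you must verify that its $\nu$-dependence cancels exactly against $\delta(a,b,\nu)$ to yield the $\nu$-independent constant of (iii). The paper instead proves the $\widetilde{N}$-relation directly (Theorem \ref{alp_rels_2}(ii)) and then \emph{introduces} the $\nu$-dependence to get (ii), which is cleaner. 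Second, under the hypothesis $a+b\equiv 0\pmod 2$ the integer $a+3b$ is always even, so in case (i) the forced valuation satisfies $v_2(a+3b)\in\{1,2\}$, not $\{0,1,2\}$.
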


\begin{remark}
{\em Statements similar to Theorems \ref{mainth} and \ref{alp_rels3}
should hold with $3$ replaced by any other prime $p$. In fact, it appears that for an odd prime $p$ assertions resulting in replacing $3b$ with $pb$ throughout Theorems \ref{mainth} and \ref{alp_rels3} might hold. The tools for such a treatment are available at \cite{rmfpaper} and \cite{projections}. In this paper, we chose $p=3$ to demonstrate the techniques in a clear manner. On the other hand our methods will fail if $a+b$ is not even, because these cases concern half integer weight modular forms and in this realm finding analogues of Propositions \ref{thchi1}--\ref{thchi12} as explicit as we do in this work seems to be difficult.}
\end{remark}

We next describe the method we use in proving Theorems  \ref{alp_rels3} and \ref{mainth}.
An important ingredient of the proof of Theorem \ref{alp_rels3} 
is an explicit construction of the Eisenstein parts of the generating functions of  $N^*(a, b; 8n+a+3b)$  
and $N(a, b; n)$, based on a method devised and employed in \cite{rmfpaper}, \cite{sqfreepaper}, and \cite{projections}. In being able to compute the Eisenstein components explicitly,
we can explain here the underlying phenomenon using as little as possible technical details, see  Section \ref{modeqs} for details on Eisenstein series. 

Let us define
\bal
\label{thet}
\varphi(z):=\sum_{m =-\infty}^{\infty} q^{m^2},~  \Psi(z):=\sum_{m =1}^{\infty} q^{m(m-1)/2+1/8},\mbox{ and } \Psi_8(z):=\Psi(8z).
\nal
Then the generating functions for $N(a,b;n)$ and $N^*(a,b;8n+a+3b)$ can be given by
\bal
\label{thet-phi}
\sum_{n =0}^{\infty} N(a,b;n)q^n = \varphi^a(z) \varphi^b(3z)
\nal
and 
\bal
\label{thet-psi}
\sum_{n =0}^{\infty} N^*(a,b;8n+a+3b)q^{8n+a+3b} = 2^{a+b} {\Psi_8^a(z) \Psi_8^b(3z)},
\nal
respectively. 
In Section \ref{sectionfour} we use the theory of modular forms to write
\bals
 \Psi_8^a(z) \Psi_8^b(3z)= \sum_{n =0}^{\infty} \alpha_n q^n +  \sum_{n =0}^{\infty} \gamma_n q^n,\\
 \varphi^a(z) \varphi^b(3z)= \sum_{n =0}^{\infty} \beta_n q^n +  \sum_{n =0}^{\infty} \gamma_n' q^n,
\nals
where $\alpha_n$ and $\beta_n$ will be given explicitly in terms of the generalized divisor function $\sigma_{k}(\epsilon, \psi; n)$, associated to certain integer $k$ and certain Dirichlet characters $\epsilon$ and $\psi$, and
\begin{equation}
\label{bound-H}
\gamma_n=O_\epsilon(n^{(a+b-2)/4+\epsilon}) \mbox{ and } \gamma'_n=O_\epsilon(n^{(a+b-2)/4+\epsilon})
\end{equation}
for any $\epsilon>0$ (See Section \ref{sectionfour} for the exact definitions of $\alpha_n$, $\beta_n$, $\gamma_n$, and $\gamma^{\prime}_n$. Note that $\alpha_n=\gamma_n=0$ if $n\not\equiv a+3b$~(mod $8$)). 
In addition, 
we have
\begin{multline}
\label{relations}
[8n+a+3b]f_{N^*}^E=\alpha_{8n+a+3b},~[8n+a+3b]f_{N}^E=\beta_{8n+a+3b}/2^{a+b},\\
{\rm and}~~[8n+a+3b]f_{\widetilde{N}}^E=( \beta_{8n+a+3b} - \beta_{2n+(a+3b)/4}   )/2^{a+b}.
\end{multline}
Thus, considering these relations, we arrive in the following equivalent version of Theorem \ref{alp_rels3}.

\begin{exe}[Second Version] \label{alp_rels} Let $a,b \in \nn_0$ be such that $a+b \equiv 0 \pmod{2}$ and let $a+b \geq 4$. For all non-negative integers $n$,  the following assertions hold:
\begin{itemize}
\item[(i)] If $a+3b \not\equiv 0 \pmod{8}$, then we have
\bal
\alpha_{8n+a+3b} = \frac{ 2 \beta_{8n+a+3b} }{ 2^{a+b}\left(2^{a+b-2} + (-1)^b 2^{(a+b-2)/2} \cos (\pi (a+3b)/4)   +1\right)}. \label{alp_rels2}
\nal
\item[(ii)] If $a+3b \equiv 0 \pmod{8}$, then, for all integers $n\geq 0$ for which $2^\nu~\Vert~8n+a+3b$, 
\bal
\alpha_{8n+a+3b}=
\frac{ 2\delta(a, b, \nu)\beta_{8n+a+3b}}{\ds 2^{a+b} \left( 2^{a+b-2} + (-1)^b 2^{(a+b-2)/2}\right)},
\label{alp_rels4}
\nal
where $\delta(a, b, \nu)$ is given in \eqref{Delta}.

\item[(iii)] If $a+3b \equiv 0 \pmod{4}$, then we have
{\bal
\alpha_{8n+a+3b} = \frac{2(\beta_{8n+a+3b} - \beta_{2n+(a+3b)/4})}{{2^{a+b} \left(2^{a+b-2} + (-1)^b 2^{(a+b-2)/2}\cos (\pi (a+3b)/4)\right)}}. \label{alp_rels1}
\nal}
\end{itemize}
\end{exe}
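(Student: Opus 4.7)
The plan is to apply the explicit formulas for the Eisenstein coefficients $\alpha_n$ and $\beta_n$ that will be developed in Section \ref{sectionfour}. Those formulas will express each of $\alpha_n$ and $\beta_n$ as a rational linear combination of generalized divisor sums $\sigma_{\ell}(\epsilon, \psi; n)$, where $\ell$ is determined by $a+b$ and the pairs $(\epsilon, \psi)$ of Dirichlet characters range over a short finite list depending on $a$ and $b$. Because $\Psi_8^a(z)\Psi_8^b(3z)$ and $\varphi^a(z)\varphi^b(3z)$ lie in the same space $M_k(\Gamma_0(N), \chi)$ by Proposition \ref{gen}, their Eisenstein projections are combinations of the \emph{same} basis of standard Eisenstein series; only the linear combination coefficients differ, reflecting how each theta function distributes weight across the cusps.

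The first step will be to exploit multiplicativity of $\sigma_{\ell}(\epsilon, \psi; \cdot)$. Writing $8n+a+3b = 2^\nu m$ with $m$ odd, each divisor sum factors as
\begin{equation*}
\sigma_{\ell}(\epsilon, \psi; 8n+a+3b) = \sigma_{\ell}(\epsilon, \psi; 2^\nu)\,\sigma_{\ell}(\epsilon, \psi; m).
\end{equation*}
The odd-part factor is common to the expansions of both $\alpha_{8n+a+3b}$ and $\beta_{8n+a+3b}$, and I would verify from the formulas of Section \ref{sectionfour} that the corresponding linear combination coefficients are proportional on odd parts (and at the prime $3$, where $\Psi_8^b(3z)$ and $\varphi^b(3z)$ contribute identically). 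Consequently, the ratio $\alpha_{8n+a+3b}/\beta_{8n+a+3b}$ collapses to a purely $2$-adic expression depending only on $\nu$, times the overall $2^{-(a+b)}$ dictated by \eqref{relations}.

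The second step will be the evaluation of this $2$-adic expression. For each character pair the local factor $\sigma_{\ell}(\epsilon, \psi; 2^\nu)$ is a geometric sum in $2^{(a+b-2)/2}$, and combining the pairs introduces a sign encoding a phase $e^{\pm i\pi(a+3b)/4}$ arising from the interaction of the coefficients $1$ and $3$ in the quadratic form; taking the real part produces the $\cos(\pi(a+3b)/4)$ factor in \eqref{alp_rels2} and \eqref{alp_rels1}. When $a+3b \not\equiv 0 \pmod 8$, the valuation $\nu$ is uniformly bounded, the geometric tail telescopes, and the ratio reduces to the constant of case (i). When $a+3b \equiv 0 \pmod 8$, $\nu$ is unbounded and a genuinely $\nu$-dependent factor survives, which one recognizes as $\delta(a, b, \nu)$ in \eqref{Delta}, giving case (ii). For case (iii), the subtraction $\beta_{2n+(a+3b)/4}$ corresponds via multiplicativity to chopping the top two terms off the local geometric sum at $2$, cancelling the trailing ``$+1$'' of case (i) and producing \eqref{alp_rels1}.

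The hard part will be organizing Section \ref{sectionfour}'s Eisenstein decomposition explicitly enough that the proportionality on odd parts is manifest, and then matching the surviving $2$-adic sum with the stated closed forms. This requires a short case analysis on the residue of $a+3b$ modulo $8$, together with careful bookkeeping of which of the characters $(\epsilon, \psi)$ contribute nontrivially at the prime $2$ and which factor as $\epsilon(2)\bar\psi(2) = \pm 1$. Once that is in place, all three assertions should follow from a single geometric-series identity at $2$, specialized according to whether $\nu$ is bounded, unbounded, or truncated by the $\widetilde{N}$ subtraction.
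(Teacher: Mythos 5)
Your proposal follows essentially the same route as the paper: it uses the explicit Eisenstein decompositions of $\Psi_8^a(z)\Psi_8^b(3z)$ and $\varphi^a(z)\varphi^b(3z)$ from Section \ref{sectionfour}, invokes multiplicativity of $\sigma_k(\epsilon,\psi;\cdot)$ to factor off a common odd (and $3$-primary) part, and reduces everything to comparing geometric sums in the local factor at $2$ (the paper's Lemma \ref{lemma3_2} and Lemma \ref{chi-4}), with the $\widetilde{N}$ subtraction truncating that geometric sum exactly as you describe. The paper merely packages this via the intermediate Theorem \ref{alp_rels_2} and Lemma \ref{lem_b} before the final case analysis on $a+3b$ modulo $8$, so your plan is a faithful outline of the actual argument.
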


The proof of Theorem \ref{alp_rels3} exploits the multiplicativity of the  generalized divisor functions in establishing \eqref{alp_rels2} and \eqref{alp_rels1}. 
%
We then show that if $a>1$, we have
\begin{equation}
\label{little-o}
n^{(a+b-1)/4}=o(\alpha_{8n+a+3b}),
\end{equation}
as $n\rightarrow\infty$.
Therefore, Theorem \ref{mainth} follows as a direct corollary of the relations
\begin{multline*}
N^*(a,b;8n+a+3b)=2^{a+b} (\alpha_{8n+a+3b}+\gamma_{8n+a+3b}),~N(a, b; 8n+a+3b)=\beta_{8n+a+3b}+\gamma_{8n+a+3b}^\prime, \\ {\rm and}~~ \widetilde{N}(a, b; 8n+a+3b)=(\beta_{8n+a+3b}-\beta_{2n+(a+3b)/4})+(\gamma_{8n+a+3b}^\prime-\gamma_{2n+(a+3b)/4}),
\end{multline*}
together with \eqref{bound-H},  \eqref{alp_rels2}, \eqref{alp_rels1}, and \eqref{little-o}.

\begin{remarks}
{\em (i) We exclude the cases $a=0$ or $a=1$ in Theorem \ref{mainth}, because in these cases $\alpha_{8n+a+3b}=0$ infinitely many times. However, one can deduce from \eqref{est1}--\eqref{est8} that, in these cases, $\alpha_{8n+a+3b}$ are always nonzero for certain subsequences of $(8n+a+3b)_{n \geq 0}$. Thus one can write an equivalent of Theorem \ref{mainth} in these cases for relevant subsequences of $(8n+a+3b)_{n \geq 0}$.

(ii) The proof of Theorem \ref{alp_rels3} uses modular identities of Propositions \ref{thchi1}--\ref{thchi12}. We can use the identities for $\psi_8^a(z)\psi_8^b(3z)$ 
to establish numerous formulas for representation numbers $N^*(a, b; n+a+3b)$. For example formulas corresponding to $(a, b)=(4, 0)$, $(8, 0)$ in \cite{ORW} and  $(2, 2)$
in \cite{cooperrmf} can be derived from Proposition \ref{thchi1}. The same is true for the main terms of the cases $(12, 0)$,  $(24, 0)$ in \cite{ORW} and the main term of $(2k, 2k)$ of \cite{cooperrmf}.
The main term of the case $(2k+1, 2k+1)$ in \cite{cooperrmf} is a consequence of 
Proposition \ref{thchi3}.
Also the case (6, 0) in \cite{ORW} can be derived from Proposition \ref{thchi4}  and the same for the main term of (10, 0) in \cite{ORW}.
(iii) The method exploited in this paper for the proof of Propositions \ref{thchi1}--\ref{thchi12} is described in a general setting in \cite{projections} and can be applied to a wide variety of representation problems.}
\end{remarks}

The structure of the paper is as follows. In Section \ref{modeqs} we review some basic facts from the theory of modular forms and give a proof of Proposition \ref{gen}. A proof of Theorem \ref{alp_rels3-0} is given in Section \ref{three}.
After establishing the explicit modular identities in Section \ref{sectionfour} and the proof of Theorem \ref{alp_rels3} in Section \ref{proof1_2}, the proof of Theorem \ref{mainth} will be described in Section \ref{proof1_1}.

\section{Modular forms} \label{modeqs}


For positive integer $N$, the level $N$ congruence subgroup $\Gamma_0(N)$ is 
\bals
\left\{ \begin{pmatrix}
a & b \\ c & d
\end{pmatrix} \Big\vert\ a,b,c,d \in \zz,\ c \equiv 0 \pmd{N},\ ad-bc=1 \right\}.
\nals
Let $k\in \nn$ and let $\chi$ be a Dirichlet character mod $N$, where $\chi(-1)=(-1)^k$.  We denote the space of modular forms of weight $k$ and character $\chi$ on $\Gamma_0(N)$ by $M_k(\Gamma_0(N),\chi)$.  (For $\chi$ induced from a primitive Dirichlet character mod $M$, where $M\mid N$, we sometimes write   $M_k((\Gamma_0(N),\psi)$ instead of $M_k(\Gamma_0(N),\chi)$.) To fix our notation, we set $\chi_D (*)= \left( \frac{D}{*} \right)$, where  $D$ is a fundamental discriminant and $\left( \frac{D}{*} \right)$ is the Kronecker symbol, thus $\chi_D$ is a primitive real Dirichlet character modulus $D$. Note that, $\chi_1$ is the principal character of modulus $1$.

A well-studied type of modular forms are certain quotients of the Dedekind eta function $\eta (z)$, which is a holomorphic function defined on the upper half plane $\hh$ by the product formula
\bals
\eta (z) = e^{\pi i z/12} \prod_{n=1}^{\infty} (1-e^{2\pi inz})= q^{1/24} \prod_{n=1}^{\infty} (1-q^n).
\nals
The quotients of products of $\eta(dz)$ for $d \in \nn$ are called eta quotients.  {The significance of eta quotients for us is that} 
\bals
\varphi(z)=\sum_{m =-\infty}^{\infty} q^{m^2}\mbox{ and }\Psi(z)=\sum_{m =1}^{\infty} q^{m(m-1)/2+1/8}
\nals
can be written as eta quotients. More precisely, from \cite[Theorem 3.5]{cooperbook}, we have
\begin{equation}
\label{etaquotient}
\varphi(z)= \frac{\eta^5(2z)}{\eta^2(z)\eta^2(4z)} \mbox{ and } \Psi(z)= \frac{\eta^2(2z)}{\eta(z)}.
\end{equation}
Set $\Psi_8(z):= \Psi(8z)$.
One can show that $\varphi^4(z)\in M_2(\Gamma_0(4), \chi_1)$ and $\Psi_8^{4}(z)$ $=\Psi^4(8z) \in  M_2(\Gamma_0(16), \chi_1)$, 
see \cite[Propositions 5.9.2 and 5.9.3]{cohenbook} for details.

The Eisenstein and cusp form subspaces of $M_k(\Gamma_0(N), \chi)$ are denoted by $E_k(\Gamma_0(N),\chi)$ and $S_k(\Gamma_0(N),\chi)$, respectively. Then we have
\bals
M_k(\Gamma_0(N),\chi) = E_k(\Gamma_0(N),\chi) \oplus S_k(\Gamma_0(N),\chi),
\nals
see \cite[p.\ 83]{stein}. Thus, for $f(z)=\sum_{n=0}^{\infty} a_f(n) q^n \in M_k(\Gamma_0(N), \chi)$, there are unique 
functions $E_f(z)=\sum_{n=0}^{\infty} e_f(n) q^n \in E_k(\Gamma_0(N), \chi)$ and 
$C_f(z)=\sum_{n=0}^{\infty} c_f(n) q^n \in S_k(\Gamma_0(N), \chi)$, such that 
$$a_f(n)=e_f(n)+c_f(n).$$
On the other hand, it is known that 
\begin{equation}
\label{Hecke Bound}
c_f(n)= O_\epsilon(n^{(k-1)/2+\epsilon}),
\end{equation}
for any $\epsilon>0$ (see \cite[p. 314]{cohenbook}). One  can use \cite[Theorem 1]{projections} to write $e_{\Psi_8^a(z) \Psi_8^b(3z)}(n)$ and $e_{\varphi^a(z) \varphi^b(3z)}(n)$ explicitly in terms of 
the generalized divisor functions defined by
\begin{align}
\sigma_{k}(\epsilon,\psi; n) :=\begin{cases} \sum_{1 \leq d\mid n}\epsilon(n/d){\psi(d)} d^{k} & \mbox{ if $n \in \nn$,} \\ 0 & \mbox{ if $n \not\in  \nn$,} \end{cases} \label{sodfnc}
\end{align}
for certain positive integer $k$ and certain primitive Dirichlet characters $\epsilon$ and $\psi$.

The space $E_k(\Gamma_0(N), \chi)$ admits a natural basis of weight $k$ Eisenstein series, which we describe now. Let $B_{k,\chi}$ be the Bernoulli number associated with the Dirichlet character $\chi$ of modulus $N$. Let $\epsilon$ and $\psi$ be two primitive Dirichlet characters of moduli $L$ and $M$, respectively, such that $\chi=\epsilon\cdot\psi$ (i.e., $\chi(n)=\epsilon(n) \psi(n)$ for all $n \in \zz$ such that $\gcd(n,N)=1$). Then the weight $k$ Eisenstein series of level $N$ and character $\chi$  
is defined as
\begin{align*}
{E_{k}(z; \epsilon,\psi)} :=& \epsilon(0) - \frac{2k}{B_{k,\chi}} \sum_{n =1}^{\infty} \sigma_{k-1}(\epsilon, \psi; n) e^{2\pi i n z}.
\end{align*}
It is known that when $k \geq 2$ and $(k,\chi) \neq (2,\chi_1)$ the collection $$\mathcal{E}_k(\Gamma_0(N), \chi)=\{ E_k(dz; \epsilon, \psi);~\epsilon \cdot \psi=\chi~{\rm and}~ LMd\mid N\}$$ forms a basis for the space $E_k(\Gamma_0(N), \chi)$, and {when $(k,\chi) = (2,\chi_1)$ the collection
\bals
\mathcal{E}_2(\Gamma_0(N), \chi_1) & =\{ E_2(dz; \epsilon, \psi);~\epsilon \cdot \psi=\chi_1,~(\epsilon,\psi) \neq (\chi_1,\chi_1) ~{\rm and}~ LMd\mid N\} \\
& \cup \{ E_2(z; \chi_1, \chi_1)-dE_2(dz; \chi_1, \chi_1);~ 1< d\mid N\}
\nals
forms a basis for the space $E_2(\Gamma_0(N), \chi_1)$}, see \cite[Theorem 5.9]{stein} for details. 

We next note that from $\eqref{etaquotient}$  for $a,b\in \mathbb{N}_0$, we have 
\bals
\Psi_8^a(z)\Psi_8^b(3z)=\frac{\eta^{2a}(16z)\eta^{2b}(48z)}{\eta^{a}(8z)\eta^{b}(24z)}
\nals
and 
\bals
\varphi^a(z)\varphi^b(3z)=\frac{\eta^{5a}(2z)\eta^{5b}(6z)}{\eta^{2a}(z)\eta^{2a}(4z)\eta^{2b}(3z)\eta^{2b}(12z)}.
\nals
In addition, letting $k=\lfloor (a+b)/4 \rfloor  \geq 1 $, by Proposition 5.9.2 and Proposition 5.9.3 of \cite{cohenbook}, we deduce that
\bal
\label{mod-psi}
\Psi_8^a(z)\Psi_8^b(3z) \in \begin{cases}
M_{2k}(\Gamma_0(48),\chi_1) & \mbox{if $a,b \equiv 0 \pmd{2}$ and $a+b \equiv 0 \pmd{4}$},\\ 
M_{2k+1}(\Gamma_0(48),\chi_{-3}) & \mbox{if $a,b \equiv 1 \pmd{2}$ and $a+b \equiv 2 \pmd{4}$},\\ 
M_{2k+1}(\Gamma_0(48),\chi_{-4}) & \mbox{if $a,b \equiv 0 \pmd{2}$ and $a+b \equiv 2 \pmd{4}$},\\ 
M_{2k}(\Gamma_0(48),\chi_{12}) & \mbox{if $a,b \equiv 1 \pmd{2}$ and $a+b \equiv 0 \pmd{4}$,}
\end{cases}
\nal
and
\bal
\label{mod-phi}
\varphi^a(z)\varphi^b(3z) \in \begin{cases}
M_{2k}(\Gamma_0(12),\chi_1) & \mbox{if $a,b \equiv 0 \pmd{2}$ and $a+b \equiv 0 \pmd{4}$},\\ 
M_{2k+1}(\Gamma_0(12),\chi_{-3}) & \mbox{if $a,b \equiv 1 \pmd{2}$ and $a+b \equiv 2 \pmd{4}$},\\ 
M_{2k+1}(\Gamma_0(12),\chi_{-4}) & \mbox{if $a,b \equiv 0 \pmd{2}$ and $a+b \equiv 2 \pmd{4}$},\\ 
M_{2k}(\Gamma_0(12),\chi_{12}) & \mbox{if $a,b \equiv 1 \pmd{2}$ and $a+b \equiv 0 \pmd{4}$}.
\end{cases}
\nal
We are now ready to prove Proposition \ref{gen}.

\begin{proof}[Proof of Proposition \ref{gen}] Part (i) is a consequence of \eqref{mod-psi}.
The identity \eqref{tn} in part (ii) is a direct corollary of the definition of $f_N(z)$. Now for $$g^{a, b}(z):=\varphi^a(z)\varphi^b(3z)= \sum_{n=0}^{\infty} N(a, b; n) q^n$$ and a Dirichlet character $\omega$ modulo $8$, let $$g_\omega^{a, b}(z)=  \sum_{n=0}^{\infty} N(a, b; n) \omega(n) q^n$$
be the twist of $g^{a, b}$ by $\omega$. From the orthogonality of characters we have that
$$f_N(z)= \sum_ {\substack{ m=0 \\ m \equiv a+3b~({\rm mod}~8)}}^{\infty} N(a, b; m) q^{m}=\frac{1}{\phi(8)} \sum_{\omega ({\rm mod}~8)}  {\bar{\omega}(a+3b)} g_{\omega}^{a, b}(z), $$
where $\phi$ is the Euler function. Since by \eqref{mod-phi} each $g^{a, b}$ is a modular form of weight $\kappa\in \{2k, 2k+1\}$, level $12$, and character $\chi\in \{\chi_1, \chi_{-3}, \chi_{-4}, \chi_{12}\}$, then by \cite[Proposition 10.3.18]{cohenbook} each $g_\omega^{a, b}$ is a modular form of weight $\kappa$, level $12 \times 8^2$, and character $\chi$ (Note that $\omega$'s are real and thus $\omega^2$'s are equal to the principal character modulo $2$).
From the above identity for $f_N(z)$ in terms of $g_{\omega}^{a, b}$'s, we conclude that $f_N(x)$ is a modular form of weight $\kappa$, level $12 \times 8^2$, and character $\chi$. This completes the proof of (ii). The proof of part (iii) is similar to part (ii) by observing that, by \eqref{mod-phi} and  \cite[Proposition 7.3.3 (b) and Theorem 7.3.4]{cohenbook}, $g^{a, b}(z)-g^{a, b}(4z)$ is a modular form of weight $\kappa$, level $12 \times 4$, and character $\chi$.

\end{proof}

\section{Proof of Theorem \ref{alp_rels3-0}}
\label{three}

We start by proving some lemmas which will be required in our proof.

\begin{lemma}
\label{identities}
(i) For all $n\in \mathbb{N}_0$, we have
$$N^*(1, 3; 8n+10)=\frac{2}{5} N(1, 3; 8n+10).$$
(ii) Let $a, b, n\in \mathbb{N}_0$ be such that $a+3b=4$, then 
$$N^*(a, b; 8n+a+3b)=\widetilde{N}(a, b; 8n+a+3b).$$
(iii) For all $n\in \mathbb{N}_0$, we have
$$N^*(1, 5; 8n+16)=\frac{1}{6} \widetilde{N}(1, 5; 8n+16).$$
\end{lemma}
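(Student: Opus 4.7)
Plan: I would prove Lemma \ref{identities} in two stages, treating part (ii) by an elementary parity argument and parts (i) and (iii) by modular-form bookkeeping.

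For part (ii), the only pairs satisfying $a+3b=4$ with $a+b$ even are $(a,b)=(4,0)$ and $(a,b)=(1,1)$. Since the squares $x^2$ take only the values $0,1,4 \pmod 8$ and the weighted squares $3y^2$ take only the values $0,3,4 \pmod 8$, I would enumerate the parity patterns of $(x_1,\ldots,x_a,y_1,\ldots,y_b)$ and keep only those in which the sum is $\equiv 4 \pmod 8$. In both cases only the all-odd and the all-even patterns survive. The all-odd representations are precisely those counted by $N^*(a,b;8n+4)$; the all-even ones, via the substitutions $x_i \mapsto x_i/2$ and $y_j \mapsto y_j/2$, are in bijection with the $N(a,b;2n+1)$ representations of $2n+1$. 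Hence $N(a,b;8n+4)=N^*(a,b;8n+4)+N(a,b;2n+1)$, which rearranges to $N^*(a,b;8n+4)=\widetilde{N}(a,b;8n+4)$.

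For parts (i) and (iii) several parity patterns contribute and a direct mod-$8$ argument no longer isolates $N^*$. Instead, I would recast each claim as an equality of holomorphic modular forms in a common ambient space $M_k(\Gamma_0(M),\chi)$. The generating function of $N^*$ on the left is, by \eqref{thet-psi} and \eqref{mod-psi}, an eta quotient lying in $M_k(\Gamma_0(48),\chi)$, while on the right I would apply the character-twist identity used in the proof of Proposition \ref{gen}(ii) to extract the Fourier coefficients of $\varphi^a(z)\varphi^b(3z)$, or of the tilde analogue $\varphi^a(z)\varphi^b(3z)-\varphi^a(4z)\varphi^b(12z)$ for part (iii), with exponents congruent to $a+3b$ modulo $8$, landing in $M_k(\Gamma_0(12\cdot 64),\chi)$. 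For part (i) this means $(a,b)=(1,3)$, $k=2$, $\chi=\chi_{12}$; for part (iii) $(a,b)=(1,5)$, $k=3$, $\chi=\chi_{-3}$. Once both sides are placed in the same finite-dimensional modular-form space, Sturm's theorem reduces each identity to checking finitely many initial Fourier coefficients, which is a routine $q$-expansion computation using \eqref{etaquotient}.

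The main obstacle is the level bookkeeping: the character-twist inflates the level from $12$ to $768$, so the Sturm bound is not small and the number of coefficients to verify is nontrivial. Isolating the correct residue class modulo $8$ as a genuine modular-form identity (rather than a formal $q$-series relation) is the most delicate step. After that the verification is mechanical, and the argument remains self-contained within Section \ref{three}, not depending on the explicit Eisenstein formulas of Section \ref{sectionfour}.
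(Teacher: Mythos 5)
Your proposal is correct and follows essentially the same route as the paper: part (ii) by the same mod-$8$ parity argument (the paper additionally reduces the case $(0,4)$ to $(4,0)$, which the literal hypothesis $a+3b=4$ does not require), and parts (i) and (iii) by placing both generating functions in a common finite-dimensional space $M_k(\Gamma_0(12\cdot 8^2),\chi)$ and verifying finitely many Fourier coefficients. The paper simply appeals to finite-dimensionality where you invoke Sturm's bound, and it makes explicit the observations $N(1,3;2)=0$ and $\widetilde{N}(1,5;0)=\widetilde{N}(1,5;8)=0$ that turn the restricted generating functions into genuine modular forms --- precisely the subtlety you flag at the end of your plan.
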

\begin{proof}
(i) We observe that $N(1, 3; 2)=0$ and thus, by \eqref{tn}, the generating function of  $N(1, 3; 8n+10)$ is a modular form. More precisely, by \eqref{mod-psi} and the proof of part (ii) of Proposition \ref{gen}, $\sum_{n=0}^{\infty} N^*(1, 3; 8n+10) q^{8n+10}$ and $\sum_{n=0}^{\infty} N(1, 3; 8n+10) q^{8n+10}$ are both in the space $M_2(\Gamma_0(12 \times 8^2), \chi_{-3})$. Since this space is finite dimensional. The identity for all values of $n$ will be verified by establishing it for finitely many values of $n$, which can be done computationally in a straightforward manner. 

(ii) If $a+3b=4$, then $(a, b)=(0, 4), (1, 1),$ or $(4, 0)$. Since the square of any odd prime modulo $8$ is $1$ and square of any even prime modulo $8$ is $0$ or $4$, then the solutions of the equations $x_1^2+3x_2^2=8n+4$ and $x_1^2+x_2^2+x_3^2+x_4^2=8n+4$ have an odd component if and only if all their components are odd. This settles the identity for $(a, b)=(1, 1)$ and $(4, 0)$. Now observe that $N^*(0, 4; 8n+12)=\widetilde{N}(0, 4; 8n+12)=0$ if $n$ is not a multiple of $3$. For $n=3 n_1$, we observe that $N^*(0, 4; 8n+12)=N^*(4, 0; 8n_1+4)$ and  $\widetilde{N}(0, 4; 8n+12)=\widetilde{N}(4, 0; 8n_1+4)$, thus the result follows from the case (4, 0) that is already proved.

(iii) It follows by an argument identical to part (i) with the starting observation that $\widetilde{N}(1, 5, 0)=\widetilde{N}(1, 5, 8)=0$ and thus the generating function of $\widetilde{N}(1, 5; 8n+16)$ is a modular form.
\end{proof}


\begin{remark}
{\em In \cite[Section 4]{ACH} it is conjectured that if $k\geq 8$,  for any partition $\lambda=(\lambda_1, \cdots, \lambda_m)$ of $k$ with ${\rm gcd}(\lambda_1, \cdots, \lambda_m)=1$, the identity 
$N^*(\lambda; 8n+\lambda_1+\cdots+\lambda_m)=C_\lambda N(\lambda; 8n+\lambda_1+\cdots+\lambda_m)$
for all $n\in \mathbb{N}_0$ does not hold.
Here, $$C_\lambda= \frac{2}{2+{i_1 \choose 4}+{i_1 \choose 2}{i_2 \choose 1}+{i_1 \choose 1}{i_3 \choose 1}},$$
where $i_j$ denotes the number of parts in $\lambda$ which are equal to $j$. We note that part (i) of Lemma \ref{identities} provides a counterexample to this conjecture.}
\end{remark}

\begin{lemma}
\label{eight}
The following assertions hold.

(i) If for all $n\in \mathbb{N}_0$ we have $N^*(a, 0; 8n+a)=C_{a, 0} {N}(a, 0; 8n+a)$,  for an even integer $a\geq 2$ and some rational $C_{a, 0}$, then $a=2, 4,$ or $6$.

(ii) If for all $n\in \mathbb{N}_0$ we have $N^*(a, 0; 8n+a)=\widetilde{C}_{a, 0} \widetilde{N}(a, 0; 8n+a)$, for an integer $a\geq 4$ where $a\equiv 0$ (mod $4$) and some rational $C_{a, 0}$, then $a=4$ or $8$. 
\end{lemma}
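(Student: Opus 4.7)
The plan is to handle each part by combining the positive identities already available with the non-constancy result of \cite{C12} stated after \eqref{BDK} in the introduction: for every $a>7$, the ratio $N^*(a,0;8n+a)/N(a,0;8n+a)$ is not constant in $n$.

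For part (i), the values $a=2,4,6$ satisfy \eqref{NstarN} with $C_{a,0}=2/(2+\binom{a}{4})$ directly from \eqref{eq31_1}. Conversely, if an even $a\geq 8$ admitted a rational $C_{a,0}$ making \eqref{NstarN} hold with $b=0$ for every $n\in\mathbb{N}_0$, then $N^*(a,0;8n+a)/N(a,0;8n+a)$ would equal the constant $C_{a,0}$ on the entire sequence $\{8n+a\}_{n\geq 0}$, contradicting the cited result from \cite{C12}. This settles part (i).

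For part (ii), $a=4$ is handled by Lemma \ref{identities}(ii) with $\widetilde{C}_{4,0}=1$, and $a=8$ by \eqref{eq31_4} specialized to $(a,b)=(8,0)$ with $\widetilde{C}_{8,0}=2/(2+\binom{8}{4})=1/36$. To rule out $a\geq 12$ with $a\equiv 0\pmod 4$, I would suppose that a rational $\widetilde{C}_{a,0}$ satisfies the identity for every $n$; by \eqref{n-tilden} this is equivalent to
\begin{equation*}
N^*(a,0;8n+a) = \widetilde{C}_{a,0}\bigl(N(a,0;8n+a) - N(a,0;(8n+a)/4)\bigr),\qquad n\in\mathbb{N}_0.
\end{equation*}
I would pin down the only candidate value of $\widetilde{C}_{a,0}$ from $n=0$, where $N^*(a,0;a)=2^a$ and both $N(a,0;a)$ and $N(a,0;a/4)$ are elementary enumerations, and then derive a contradiction by verifying that the resulting equation fails at $n=1$.

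The main obstacle is running this last step uniformly across the infinite family $a\geq 12$ with $a\equiv 0\pmod 4$. The route I plan to take is to invoke Proposition \ref{gen}: the three generating functions $f_{N^*}$, $f_N$, and $f_{\widetilde{N}}$ lie in explicit finite-dimensional spaces $M_k(\Gamma_0(N),\chi)$ whose parameters depend only on $a\pmod 8$. Consequently, the conjectured linear relation on the full arithmetic progression $\{8n+a\}$ reduces via Sturm's bound to a finite comparison of Fourier coefficients; the infinite family $a\geq 12$ then collapses to finitely many representative residue classes modulo $8$, each of which can be verified to fail by a direct coefficient check.
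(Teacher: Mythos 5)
Your part (i) is logically sound but takes a different route from the paper: you outsource the case $a\geq 8$ entirely to the non-constancy theorem of \cite{C12} quoted in the introduction, whereas the paper proves both parts by one self-contained mechanism (it only borrows the natural-boundary lemma, \cite[Lemma 3]{C12}). That mechanism is the idea you are missing: by \eqref{tns}, \eqref{tn}, and \eqref{n-tilden}, the hypothesis forces $f_{N^*}(z)-\widetilde{C}_{a,0}f_{\widetilde{N}}(z)$ to equal the finite exponential sum $\widetilde{C}_{a,0}\sum_{0\leq 8\ell+a<a}\widetilde{N}(a,0;8\ell+a)q^{8\ell+a}$ coming from the low-order terms of the arithmetic progression; for $a>8$ this sum is non-trivial because $\widetilde{N}(a,0;s)>0$ for $s$ the least positive residue of $a$ modulo $8$ and $\widetilde{C}_{a,0}\neq 0$, while the left-hand side is a genuine modular form by Proposition \ref{gen} and hence cannot be a non-trivial finite exponential sum. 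This single observation is uniform in $a$ and disposes of both parts at once.

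Part (ii) as you propose it has a genuine gap. Your fallback plan --- reduce the infinite family $a\geq 12$, $a\equiv 0\ ({\rm mod}\ 4)$, to ``finitely many representative residue classes modulo $8$'' via Sturm's bound --- does not work, because the parameters of the spaces in Proposition \ref{gen} do \emph{not} depend only on $a\ ({\rm mod}\ 8)$: the weight is roughly $a/2$ and grows with $a$, so the space, its dimension, and the Sturm bound all change with $a$; moreover the Fourier coefficients $N(a,0;m)$ themselves depend on $a$ and not merely on its residue class. You would therefore be left with one unbounded computation per value of $a$, and your first plan (fix $\widetilde{C}_{a,0}$ at $n=0$ and exhibit failure at $n=1$) comes with no argument that the identity actually fails at $n=1$ for every such $a$. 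The correct repairs are either the finite-exponential-sum argument sketched above, or a genuinely uniform asymptotic input; the pointwise checks you describe cannot close the infinite family.
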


\begin{proof}
We prove (ii), the proof of (i) is similar. Let $a>8$ be a multiple of 4 and assume that there is a rational constant $\widetilde{C}_{a, 0}$ such that $N^*(a, 0; 8n+a)=\widetilde{C}_{a, 0} \widetilde{N}(a, 0; 8n+a)$ for all $n\in \mathbb{N}_0$. Since $N^*(a, 0; 8n+a)>0$ for all $n$ we conclude that $\widetilde{C}_{a, 0}\neq 0$. The identity \eqref{n-tilden}, relating $\widetilde{N}$ to $N$, together with \eqref{tns} and \eqref{tn} imply that 
\begin{equation}
\label{above-0}
f_{N^*}(z)-\widetilde{C}_{a, 0} f_{\widetilde{N}}(z)=\widetilde{C}_{a, 0}\sum_ {\substack{ \ell \in \mathbb{Z}\\0\leq 8\ell+a<a}} \widetilde{N}(a, 0; 8\ell+a) q^{8\ell+a}.
\end{equation}
Now let $s$ be the least positive residue modulo $8$ of $a$. Since $a>8$, then $0< s\leq8$, and thus the right-hand side of \eqref{above-0} is a non-trivial finite exponential sum as $\widetilde{N}(a, 0; s)>0$ and $\widetilde{C}_{a, 0}\neq 0$. On the other hand by Proposition \ref{gen}, the left-hand side of \eqref{above-0} is a modular form. But this is a contradiction since a non-trivial  modular form has the real axis as a natural analytic boundary (see \cite[Lemma 3]{C12}) and thus cannot be a non-trivial finite exponential sum. 
\end{proof}

\begin{proof}[Proof of Theorem \ref{alp_rels3-0}] 
(i) 
Assume that \eqref{NstarN} holds. Thus, there is a rational constant  $C_{a, b}$ 
such that 
\begin{equation}
\label{equality}
N^*(a, b; 8n+a+3b)=C_{a, b} N(a, b; 8n+a+3b)
\end{equation}
for all $n\in \mathbb{N}_0$. Note that such $C_{a, b}$ is non-zero, since otherwise $N^*(a, b; 8n+a+3b)=0$, a contradiction. Then, from \eqref{equality} together with \eqref{tns} and \eqref{tn}, we deduce that
\begin{equation}
\label{above}
f_{N^*}(z)-C_{a, b} f_{N}(z)=C_{a, b}\sum_ {\substack{ \ell \in \mathbb{Z}\\0\leq 8\ell+a+3b<a+3b}} N(a, b; 8\ell+a+3b) q^{8\ell+a+3b}.
\end{equation}
We claim that if $(a, b)\not\in S$, then the left-hand side of \eqref{above} is a non-trivial finite exponential sum. We prove this by considering cases.

Case 1: If $a=0$ and $(a, b)\not\in S$, then \eqref{equality} holds trivially if $n$ is not a multiple of $3$ (both sides are zero in this case). For $n=3n_1$ we observe that $$N^*(0, b; 24n_1+3b)=N^*(b, 0; 8n_1+b)~~{\rm and}~~ N(0, b; 24n_1+3b)=N(b, 0; 8n_1+b).$$
But we know that if $a=0$ and $(a, b)\not\in S$, then, by Lemma \ref{eight} (i), $$N^*(b, 0; 8n_1+b)=C_{0, b}N(b, 0; 8n_1+b)$$
for all $n_1\in \mathbb{N}_0$ never holds.

Case 2: If $a=1$, $b=8k+3$ with $k\geq 1$, and $(a, b)\not\in S$, then 
$0<10=8(1)+2< 1+3b$ and $N(1, 3; 10)>0$. Thus under the given conditions the right-hand side of \eqref{above} is non-trivial. On the other hand, by Proposition \ref{gen}, the left-hand side of $\eqref{above}$ is a modular form. This is a contradiction as described in the proof of Lemma \ref{eight}.

Case 3: In all other cases (i.e., $a=1$, $b\neq 8k+3$ with $k\geq 1$, and $(a, b)\not\in S$, or  $a>1$ and $(a, b)\not\in S$) we have $N(a, b; s)>0$, where $0\leq s \leq 7$ is the least non-negative residue of $a+3b$ modulo $8$.
Now by an appeal to the modularity of the left-hand side of \eqref{above} and non-triviality of the right-hand side of \eqref{above}, and following an argument identical to the one described in the proof of Lemma \ref{eight} and in Case 2, we conclude that \eqref{NstarN} does not hold in these cases. 

On the other hand if $(a, b)\in S$, then the existence of $C_{a, b}$ and the identity \eqref{equality} follows from \cite[Lemma 2.7]{C10}, \cite[Theorem 1.2]{ACH}, and Lemma \ref{identities} (i).

(ii) The proof follows the argument of part (i), by replacing $N$ to $\widetilde{N}$, $S$ to $\widetilde{S}$, $8k+3$ to $8k+5$, and $s$ (the least non-negative residue of $a+3b$) to $s$ (the least positive residue of $a+3b$).  Observe that if $(a, b)\in \widetilde{S}$, then the existence of $\widetilde{C}_{a, b}$ in the identity $$N^*(a, b; 8n+a+3b)=\widetilde{C}_{a, b} \widetilde{N}(a, b; 8n+a+3b)$$ is a consequence of parts (ii) and (iii) of Lemma \ref{identities} and \cite[Theorem 1.4]{C4}.
\end{proof}

\section{Modular Identities}
\label{sectionfour}
We now state and prove the modular identities used in the proof of Theorem \ref{alp_rels}. We obtain these identities by expressing $ \Psi_{8}^{a}(z)\Psi_{8}^{b}(3z)$ (respectively $ \varphi^{a}(z)\varphi^{b}(3z)$) as an explicit linear combination of the Eisenstein series in the set $\mathcal{E}_m(\Gamma_0(N), \chi)$ and a cusp form in $S_m(\Gamma_0(N), \chi)$, where $m$, $N$, and $\chi$, as in \eqref{mod-psi} and \eqref{mod-phi}, are depending on the parity of $a$ and $b$ and classes of $a+b$ mod $4$. The identities concerning $ \Psi_{8}^{a}(z)\Psi_{8}^{b}(3z)$ are new, however formulas for $ \varphi^{a}(z)\varphi^{b}(3z)$ have appeared in previous works, {see \cite{rmfpaper}, \cite{projections} (for general $a$ and $b$), and \cite{cooperrmf} (for $a=b$).} For the sake of completeness we re-state the formulas for $ \varphi^{a}(z)\varphi^{b}(3z)$ here.

We first describe these identities in Propositions \ref{thchi1}--\ref{thchi12} and then we prove them.

\begin{proposition} \label{thchi1}
Let $a,b \in \nn_0$ {be} such that $a+b=4k  \geq 4 $ and both $a,b$ are even. Then, for some cusp form $C_1(z) \in S_{2k}(\Gamma_0(48),\chi_1)$, we have
\bals
\Psi_8^a(z) \Psi_8^b(3z) & = a_{1,4}   \left(((-3)^{a/2}-1) E_{2k}(4z; \chi_1, \chi_1) + (3^{2k}-(-3)^{a/2}) E_{2k}(12z; \chi_1, \chi_1) \right) \\
&  + a_{1,8}  \left(((-3)^{a/2}-1)E_{2k}(8z; \chi_1, \chi_1) + (3^{2k}-(-3)^{a/2}) E_{2k}(24z; \chi_1, \chi_1) \right) \\
&  + a_{1,16}  \left( ((-3)^{a/2}-1) E_{2k}(16z; \chi_1, \chi_1) + (3^{2k}-(-3)^{a/2}) E_{2k}(48z; \chi_1, \chi_1) \right)\\
&+ C_1(z),
\nals
where
\bals
a_{1,4}&= \frac{((-1)^{(a+3b)/4}  - 1) }{{ 2^{4k}}(2^{2k}-1)(3^{2k}-1)},\\
a_{1,8}&=  \frac{ ({2}^{2k} + 1 - (-1)^{(a+3b)/4}) }{{ 2^{4k}}(2^{2k}-1)(3^{2k}-1)},\\
a_{1,16}&= - \frac{ 2^{2k} }{{ 2^{4k}}(2^{2k}-1)(3^{2k}-1)}.
\nals
Also for some cusp form $C_2(z)\in S_{2k}(\Gamma_0(12),\chi_1)$, we have
\bals
 \varphi^{a}(z)\varphi^{b}(3z)& = b_{1,1}  \left( ((-3)^{a/2}-1) E_{2k}(z; \chi_1, \chi_1) + (3^{2k}-(-3)^{a/2}) E_{2k}(3z; \chi_1, \chi_1)  \right) \\
& + b_{1,2}  \left( ((-3)^{a/2}-1) E_{2k}(2z; \chi_1, \chi_1) + (3^{2k}-(-3)^{a/2}) E_{2k}(6z; \chi_1, \chi_1)  \right) \\
& + b_{1,4}  \left( ((-3)^{a/2}-1) E_{2k}(4z; \chi_1, \chi_1) + (3^{2k}-(-3)^{a/2}) E_{2k}(12z; \chi_1, \chi_1)  \right) \\
& + C_2(z),
\nals
where
\bals
b_{1,1}&= \frac{(-1)^{(a+3b)/4}}{(2^{2k}-1)(3^{2k}-1)} ,\\
b_{1,2}&= - \frac{ (1+(-1)^{(a+3b)/4})}{(2^{2k}-1)(3^{2k}-1)} , \\
b_{1,4}&= \frac{2^{2k}}{(2^{2k}-1)(3^{2k}-1)}.
\nals
\end{proposition}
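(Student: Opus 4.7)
My plan is to establish both identities by explicitly computing the Eisenstein projections of the two eta quotients $\Psi_8^a(z)\Psi_8^b(3z)$ and $\varphi^a(z)\varphi^b(3z)$. First, I would invoke \eqref{mod-psi} and \eqref{mod-phi}: since $a, b$ are both even and $a+b = 4k \geq 4$, we have $\Psi_8^a(z)\Psi_8^b(3z) \in M_{2k}(\Gamma_0(48), \chi_1)$ and $\varphi^a(z)\varphi^b(3z) \in M_{2k}(\Gamma_0(12), \chi_1)$. Each such space decomposes as $E_{2k}(\Gamma_0(N), \chi_1) \oplus S_{2k}(\Gamma_0(N), \chi_1)$, and because $\chi_1$ is the trivial character of modulus $1$ and $2k \geq 4$, the only character decomposition $\chi_1 = \epsilon \cdot \psi$ with $\epsilon,\psi$ primitive is $(\epsilon,\psi)=(\chi_1,\chi_1)$; hence the basis $\mathcal{E}_{2k}(\Gamma_0(N),\chi_1)$ consists of $\{E_{2k}(dz;\chi_1,\chi_1): d \mid N\}$. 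Consequently, each of our modular forms admits a unique expression of the form $\sum_{d \mid N} c_d\, E_{2k}(dz;\chi_1,\chi_1) + C(z)$ with $C(z)$ a cusp form.

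Next, to extract the $c_d$, I would apply the explicit Eisenstein projection formula of \cite[Theorem 1]{projections}, which expresses each $c_d$ as a finite combination of the constant terms of the Fourier expansion of the modular form at the cusps of $\Gamma_0(N)$. Using the eta-quotient representations
\bals
\Psi_8^a(z)\Psi_8^b(3z) = \frac{\eta^{2a}(16z)\eta^{2b}(48z)}{\eta^a(8z)\eta^b(24z)}, \qquad \varphi^a(z)\varphi^b(3z) = \frac{\eta^{5a}(2z)\eta^{5b}(6z)}{\eta^{2a}(z)\eta^{2a}(4z)\eta^{2b}(3z)\eta^{2b}(12z)},
\nals
the expansion at each cusp $s = p/q$ is computed via the classical modular transformation law of $\eta$ under $SL_2(\mathbb{Z})$, together with the standard parametrization of the cusps of $\Gamma_0(48)$ and $\Gamma_0(12)$.

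I expect the coefficients to then organize themselves according to the behavior at the prime $3$: cusps whose denominator is coprime to $3$ produce contributions involving the factor $(-3)^{a/2}$ (arising from the action of the relevant $SL_2(\mathbb{Z})$ matrix on the dilated factor $\Psi_8^b(3z)$ or $\varphi^b(3z)$), while cusps whose denominator is divisible by $3$ produce contributions scaled by $3^{2k}$ (arising from the relative width of those cusps). This predicts the structural appearance of $(-3)^{a/2}-1$ and $3^{2k}-(-3)^{a/2}$ in the stated formulas, with the overall denominators $(2^{2k}-1)(3^{2k}-1)$ absorbing the Bernoulli-number factors $-2k/B_{2k,\chi_1}$ appearing in the definition of $E_{2k}$. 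The absence of $E_{2k}(dz;\chi_1,\chi_1)$ for $d \in \{1,2,3,6\}$ in the $\Psi_8$-expansion reflects that $\Psi_8^a(z)\Psi_8^b(3z)$ vanishes at the cusps of $\Gamma_0(48)$ whose denominators are divisors of $6$, forcing $c_d = 0$ for these indices.

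The main obstacle will be the careful bookkeeping of the cusp expansions and the subsequent linear-algebra collapse: $\Gamma_0(48)$ has ten inequivalent cusps, and one must compute the leading Fourier coefficient of the eta quotient at each of them, then assemble the ten contributions into the clean three-parameter formula. Exploiting the orbit structure of the cusps under the Atkin-Lehner involutions at $2$ and $3$, together with identities among $B_{2k,\chi_1}$, is what makes the final expression compact. Once the $c_d$ are determined, the residual modular form $\Psi_8^a(z)\Psi_8^b(3z) - \sum_{d} c_d\, E_{2k}(dz;\chi_1,\chi_1)$ automatically lies in $S_{2k}(\Gamma_0(48),\chi_1)$ by construction; we name it $C_1(z)$. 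The $\varphi$-identity is handled analogously with $N = 12$, yielding $C_2(z)$.
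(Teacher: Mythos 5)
Your overall strategy is the one the paper actually uses: both identities are obtained by projecting the eta quotients onto the Eisenstein subspace, either directly via \cite[Theorem 1]{projections} or, equivalently, by computing the constant terms of $\Psi_8^a(z)\Psi_8^b(3z)$, $\varphi^a(z)\varphi^b(3z)$ and of the candidate Eisenstein combination at a complete set of inequivalent cusps of $\Gamma_0(48)$ and $\Gamma_0(12)$ (the paper uses K\"ohler's transformation formula \cite[Proposition 2.1]{Kohler} for the eta quotients and \cite[Proposition 8.5.6]{cohenbook} for the Eisenstein series), and then observing that the difference is a cusp form because all its constant terms vanish. So the route is essentially the same.

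Two points in your write-up are nonetheless wrong and need repair. First, $a+b=4k\geq 4$ only gives $2k\geq 2$, not $2k\geq 4$; the case $k=1$ (i.e.\ $(a,b)=(4,0),(2,2),(0,4)$) is squarely within the proposition, and there $\{E_2(dz;\chi_1,\chi_1):\ d\mid N\}$ is \emph{not} a basis of $E_2(\Gamma_0(N),\chi_1)$, since $E_2(z;\chi_1,\chi_1)$ is only quasi-modular. The correct basis is $\{E_2(z;\chi_1,\chi_1)-dE_2(dz;\chi_1,\chi_1):\ 1<d\mid N\}$; the claimed expansion $\sum_d c_d E_2(dz)+C(z)$ is legitimate only because the stated coefficients happen to satisfy the linear relation $\sum_d c_d/d=0$ (which must be checked), and the constant-term input at the cusps is the formula for the differences $E_2(z)-dE_2(dz)$, not for $E_2(dz)$ alone. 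Second, your structural explanation for the absence of $d\in\{1,2,3,6\}$ is factually incorrect: by the paper's computations $\Psi_8^a(z)\Psi_8^b(3z)$ has \emph{nonzero} constant term at the cusps $1/1,\,1/2,\,1/3,\,1/6$ of $\Gamma_0(48)$ and vanishes only at $1/16$ and $1/48$; moreover $[E_{2k}(dz;\chi_1,\chi_1)]_{a/c}=(\gcd(c,d)/d)^{2k}$ never vanishes, so no coefficient $c_d$ is read off a single cusp --- the $c_d$ come from solving the linear system over all twelve (not ten) cusps in $R(48)$. Neither issue changes the final formulas, but as written your argument does not cover $a+b=4$, and the heuristic for which $d$ survive is not a proof.
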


\begin{proposition} \label{thchi3}
Let $a,b \in \nn_0$ {be} such that $a+b=4k+2 \geq 6 $ and both $a,b$ are odd. Then, for some cusp form $C_3(z)\in S_{2k+1}(\Gamma_0(48),\chi_{-3})$, we have
\bals
\Psi_8^a(z) \Psi_8^b(3z) & =   a_{2,4} \left( E_{2k+1}(4z;\chi_{1},\chi_{-3}) - (-1)^{(a+3b)/4} (-3)^{(a-1)/2} E_{2k+1}(4z;\chi_{-3},\chi_{1}) \right)\\
& + a_{2,8} \left( E_{2k+1}(8z;\chi_{1},\chi_{-3}) - (-3)^{(a-1)/2} E_{2k+1}(8z;\chi_{-3},\chi_{1}) \right)\\
& + a_{2,16} \left( E_{2k+1}(16z;\chi_{1},\chi_{-3}) + (-3)^{(a-1)/2} E_{2k+1}(16z;\chi_{-3},\chi_{1}) \right) + C_3(z),
\nals
where
\bals
a_{2,4}=& {\frac {1 -   \left( -1 \right) ^{(a+3b)/4}  }{{ 2^{4k+2}} ({2}^{2k+1}+1)}},\\
a_{2,8}=& {\frac { {2}^{2k+1} - \left( 1-  \left( -1 \right) ^{(a+3b)/4}  \right)}{{2^{4k+2}}(2^{2k+1}+1)}},\\
a_{2,16}=&-{\frac {1}{2^{2k+1} ({2}^{2k+1}+1)}}.
\nals
Also for some cusp form $C_4(z)\in S_{2k+1}(\Gamma_0(12),\chi_{-3})$, we have
\bals
 \varphi^{a}(z)\varphi^{b}(3z) & = b_{2,1} \left( E_{2k+1}(z;\chi_{1},\chi_{-3}) + (-3)^{(a-1)/2} E_{2k+1}(z;\chi_{-3},\chi_{1}) \right) \\
 & + b_{2,2} \left( E_{2k+1}(2z;\chi_{1},\chi_{-3}) - (-1)^{(a+3b)/4} (-3)^{(a-1)/2} E_{2k+1}(2z;\chi_{-3},\chi_{1}) \right)\\
 & + b_{2,4} \left( E_{2k+1}(4z;\chi_{1},\chi_{-3}) + (-3)^{(a-1)/2} E_{2k+1}(4z;\chi_{-3},\chi_{1}) \right) + C_4(z),
\nals
where
\bals
b_{2,1} & = -{\frac { \left( -1 \right) ^{(a+3b)/4}}{{2}^{2k+1}+1}},\\
b_{2,2} & = {\frac { 1 + (-1)^{(a+3b)/4}}{{2}^{2k+1}+1}}  ,\\
b_{2,4} & = {\frac {{2}^{2k+1}}{{2}^{2k+1}+1}}.
\nals
\end{proposition}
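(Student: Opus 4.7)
The plan is to follow the methodology developed in \cite{projections} and apply it in the specific setting of $M_{2k+1}(\Gamma_0(48), \chi_{-3})$ and $M_{2k+1}(\Gamma_0(12), \chi_{-3})$. By \eqref{mod-psi} the function $f(z):=\Psi_8^a(z)\Psi_8^b(3z)$ lies in $M_{2k+1}(\Gamma_0(48), \chi_{-3})$, and the decomposition into Eisenstein and cusp parts yields $f=E_f+C_3$ with $E_f\in E_{2k+1}(\Gamma_0(48),\chi_{-3})$ and $C_3\in S_{2k+1}(\Gamma_0(48),\chi_{-3})$. The only primitive factorizations $\chi_{-3}=\epsilon\cdot\psi$ are $(\epsilon,\psi)=(\chi_1,\chi_{-3})$ and $(\chi_{-3},\chi_1)$, each with conductor product $LM=3$, so the Eisenstein basis $\mathcal{E}_{2k+1}(\Gamma_0(48),\chi_{-3})$ consists of the ten series $E_{2k+1}(dz;\epsilon,\psi)$ with $d\in\{1,2,4,8,16\}$. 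Thus $E_f$ is an \emph{a priori} ten-parameter linear combination that must be pinned down.

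To determine the coefficients I would apply \cite[Theorem 1]{projections}, which expresses each coefficient of $E_f$ as an explicit weighted sum over the cusps of $\Gamma_0(48)$ of the constant terms in the Fourier expansions of $f$ at those cusps. Because $f$ is an eta quotient $\eta^{2a}(16z)\eta^{2b}(48z)/(\eta^a(8z)\eta^b(24z))$, the order of vanishing and the leading constant at every cusp $s/t$ of $\Gamma_0(48)$ can be computed from the standard modular transformation of $\eta$ (see e.g.\ \cite[Theorem 5.8.1]{cohenbook}). The cuspidal orders control which Eisenstein basis elements enter with nonzero coefficient: cusps at which $f$ vanishes force cancellations that couple the $(\chi_1,\chi_{-3})$ and $(\chi_{-3},\chi_1)$ components together with the factor $(-3)^{(a-1)/2}$, and they also eliminate the divisors $d=1,2$, explaining why only $d\in\{4,8,16\}$ survive. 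The remaining nonzero constant terms produce the displayed combinations with scalars $a_{2,4}$, $a_{2,8}$, $a_{2,16}$ after simplifying the Gauss sums and Bernoulli factors associated with $\chi_{-3}$.

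The statement for $\varphi^a(z)\varphi^b(3z)\in M_{2k+1}(\Gamma_0(12),\chi_{-3})$ is handled in parallel, noting that now the Eisenstein basis $\mathcal{E}_{2k+1}(\Gamma_0(12),\chi_{-3})$ has only six elements $E_{2k+1}(dz;\epsilon,\psi)$ with $d\in\{1,2,4\}$, and that the eta quotient $\eta^{5a}(2z)\eta^{5b}(6z)/(\eta^{2a}(z)\eta^{2a}(4z)\eta^{2b}(3z)\eta^{2b}(12z))$ has simpler cusp behavior on the smaller group. Applying the same projection formula produces the asserted coefficients $b_{2,1}$, $b_{2,2}$, $b_{2,4}$ and the corresponding coupling ratios between $(\chi_1,\chi_{-3})$ and $(\chi_{-3},\chi_1)$. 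These formulas were in fact already recorded in \cite{rmfpaper,projections,cooperrmf}, so for this half one can simply cite those references.

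The main obstacle is entirely computational: bookkeeping the expansions at all cusps of $\Gamma_0(48)$ for the eta quotient representation of $\Psi_8^a(z)\Psi_8^b(3z)$, correctly tracking the dependence of the leading coefficient on $(a,b)$ (in particular on the parities encoded by $(-1)^{(a+3b)/4}$ and the power of $(-3)$), and then verifying that the resulting Eisenstein combination matches the stated closed form. Once this bookkeeping is carried out, writing $f-E_f=C_3$ and using modularity of $f$ and $E_f$ immediately gives $C_3\in S_{2k+1}(\Gamma_0(48),\chi_{-3})$, which completes the proof.
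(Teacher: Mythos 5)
Your proposal follows essentially the same route as the paper: the authors likewise derive Proposition \ref{thchi3} as a direct consequence of \cite[Theorem 1]{projections}, and alternatively verify it by enumerating the inequivalent cusps of $\Gamma_0(48)$ and $\Gamma_0(12)$, computing the constant terms of the eta quotients (via \cite[Proposition 2.1]{Kohler}) and of the Eisenstein series there, and checking that the difference between the eta quotient and the stated Eisenstein combination has vanishing constant term at every cusp, hence is the cusp form $C_3$ (resp.\ $C_4$). Your identification of the ten-element (resp.\ six-element) Eisenstein basis and the reduction to cusp-by-cusp bookkeeping is exactly the content of their argument.
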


\begin{proposition} \label{thchi4}
Let $a,b \in \nn_0$ {be} such that $a+b=4k+2 \geq 6$ and both $a,b$ are even. Then, for some cusp form $C_5(z)\in S_{2k+1}(\Gamma_0(48),\chi_{-4})$, we have
\bals
\Psi_8^a(z) \Psi_8^b(3z) & =   a_{3,2} \left( E_{2k+1}(2z;\chi_{1},\chi_{-4}) +  (-1)^{(a+3b-2)/4} E_{2k+1}(2 z;\chi_{-4},\chi_{1}) \right)\\
& + a_{3,4} \left( E_{2k+1}(4z;\chi_{1},\chi_{-4}) + (-1)^{(a+3b-2)/4} 2^{2k} E_{2k+1}(4 z;\chi_{-4},\chi_{1}) \right) \\
& + a_{3,6} \left( E_{2k+1}(6z;\chi_{1},\chi_{-4}) - (-1)^{(a+3b-2)/4} E_{2k+1}(6 z;\chi_{-4},\chi_{1}) \right) \\
& + a_{3,12} \left( E_{2k+1}(12z;\chi_{1},\chi_{-4}) - (-1)^{(a+3b-2)/4} 2^{2k} E_{2k+1}(12 z;\chi_{-4},\chi_{1}) \right) \\
& + C_5(z),
\nals
where
\bals
a_{3,2}=&-\frac {(-3)^{a/2}-1}{{ 2^{4k+2}}(3^{2k+1}+1)},\\
a_{3,4}=& \frac {(-3)^{a/2}- 1}{{ 2^{4k+2}}(3^{2k+1}+1)},\\
a_{3,6}=& \frac { 3^{2k+1} + (-3)^{a/2} }{{ 2^{4k+2}}(3^{2k+1}+1)},\\
a_{3,12}= &- \frac {3^{2k+1}+(-3)^{a/2}}{{ 2^{4k+2}}(3^{2k+1}+1)}.
\nals
Also for some cusp form $C_6(z)\in S_{2k+1}(\Gamma_0(12),\chi_{-4})$, we have
\bals
\varphi^{a}(z)\varphi^{b}(3z) & = b_{3,1} \left( E_{2k+1}(z;\chi_{1},\chi_{-4}) + (-1)^{(a+3b-2)/4} 2^{2k} E_{2k+1}(z;\chi_{-4},\chi_{1}) \right)\\
& + b_{3,3} \left( E_{2k+1}(3z;\chi_{1},\chi_{-4}) - (-1)^{(a+3b-2)/4} 2^{2k} E_{2k+1}(3z;\chi_{-4},\chi_{1}) \right)+ C_6(z),
\nals
where
\bals
b_{3,1} & =- \frac {{(-3)}^{a/2}-1}{{3}^{2k+1}+1},\\
b_{3,3} & = \frac { {3}^{2k+1}+{(-3)}^{a/2} }{{3}^{2k+1}+1}.
\nals
\end{proposition}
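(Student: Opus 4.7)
The plan is to adapt the projection method of \cite{projections} that was just used to obtain Propositions \ref{thchi1} and \ref{thchi3}; Proposition \ref{thchi4} is the case-analysis counterpart for the third parity class in \eqref{mod-psi}. By \eqref{mod-psi}, under the hypothesis $a+b=4k+2\geq 6$ with $a,b$ both even, we have $\Psi_8^a(z)\Psi_8^b(3z)\in M_{2k+1}(\Gamma_0(48),\chi_{-4})$, and by \eqref{mod-phi}, $\varphi^a(z)\varphi^b(3z)\in M_{2k+1}(\Gamma_0(12),\chi_{-4})$. Using the splitting $M_{2k+1}=E_{2k+1}\oplus S_{2k+1}$, the cusp forms $C_5(z)$ and $C_6(z)$ absorb the $S_{2k+1}$ components, so the content of the proposition is the identification of the Eisenstein projections of the two eta quotients.

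First I would make the basis $\mathcal{E}_{2k+1}(\Gamma_0(48),\chi_{-4})$ explicit. Since $\chi_{-4}$ is primitive of conductor $4$, the only factorizations $\epsilon\cdot\psi=\chi_{-4}$ into primitive Dirichlet characters are $(\chi_1,\chi_{-4})$ and $(\chi_{-4},\chi_1)$. Thus a basis of $E_{2k+1}(\Gamma_0(48),\chi_{-4})$ consists of $E_{2k+1}(dz;\chi_1,\chi_{-4})$ and $E_{2k+1}(dz;\chi_{-4},\chi_1)$ for the divisors $d$ of $12$, and analogously (with fewer $d$) for level $12$. This explains \textit{a priori} why only divisors $d\in\{2,4,6,12\}$ (respectively $d\in\{1,3\}$) show up in the stated formulas.

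Next I would write each of $\Psi_8^a(z)\Psi_8^b(3z)$ and $\varphi^a(z)\varphi^b(3z)$ as an unknown linear combination of the above basis elements plus a cusp form, and determine the coefficients by invoking Theorem 1 of \cite{projections}. That theorem produces the Eisenstein projection of a given eta quotient directly in terms of its orders at the cusps of $\Gamma_0(N)$ and the Fourier expansions of the basis Eisenstein series at those cusps. For $\Psi_8^a(z)\Psi_8^b(3z)$ I would use the eta-quotient representation $\eta^{2a}(16z)\eta^{2b}(48z)/\bigl(\eta^a(8z)\eta^b(24z)\bigr)$ implied by \eqref{etaquotient} and plug it into the projection formula; the $\varphi^a(z)\varphi^b(3z)$ case is the special case $p=3$ already treated in \cite{rmfpaper} and \cite{projections}, so for that half one only needs to verify that $b_{3,1}$ and $b_{3,3}$ match the formulas extracted there.

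The main obstacle will be simplification, not derivation: the output of Theorem 1 of \cite{projections} is a weighted sum over cusp-representatives and over characters, and one must identify this with the compact closed forms $a_{3,2},a_{3,4},a_{3,6},a_{3,12}$. The factor $(-3)^{a/2}$ comes from collecting local Gauss-sum contributions at the prime $3$ that separate the $\chi_1$-cusps from the $\chi_{-4}$-cusps, while the sign $(-1)^{(a+3b-2)/4}$ tracks the congruence class of $a+3b$ modulo $8$ (equivalently, the leading exponent in the $q$-expansion of $\Psi_8^a(z)\Psi_8^b(3z)$). A routine case split according to the parities of $a/2$ and of $(a+3b-2)/4$, combined with the identities $3^{2k+1}+1=(3+1)(3^{2k}-3^{2k-1}+\cdots+1)$ used to produce the common denominator $3^{2k+1}+1$, should collapse the sums to the stated coefficients. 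Finally, symmetry of the computation in the two partner basis series $E_{2k+1}(dz;\chi_1,\chi_{-4})$ and $E_{2k+1}(dz;\chi_{-4},\chi_1)$ yields the signs and the factor $2^{2k}$ appearing in $a_{3,4}$ and $a_{3,12}$ (as well as in $b_{3,1}$ and $b_{3,3}$), reflecting the ramification of $2$ in passing from level $12$ to level $48$.
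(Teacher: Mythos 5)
Your proposal follows essentially the same route as the paper: the authors likewise obtain Propositions \ref{thchi1}--\ref{thchi12} by citing \cite[Theorem 1]{projections}, and their alternative verification is exactly your matching of Eisenstein components via expansions at the cusps of $\Gamma_0(48)$ and $\Gamma_0(12)$ (computed from the eta-quotient representations via K\"ohler's formula), so that the difference $C_5(z)$, $C_6(z)$ has vanishing constant term at every cusp and is therefore cuspidal. The only small imprecision is that the projection is governed by the \emph{constant terms} of the forms at the cusps, not their orders of vanishing.
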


\begin{proposition} \label{thchi12}
Let $a,b \in \nn_0$ {be} such that $a+b=4k \geq 4 $ and both $a,b$ are odd. Then, for some cusp form $C_7(z) \in M_{2k}(\Gamma_0(48),\chi_{12})$, we have
\bals
\Psi_8^a(z) \Psi_8^b(3z) & = 
 a_{4,2} \left( E_{2k}(2z; \chi_1,\chi_{12}) - (-3)^{(a-1)/2} E_{2k}(2z; \chi_{-3},\chi_{-4}) \right) \\
& +  a_{4,4} \left( E_{2k}(4z; \chi_1,\chi_{12}) + (-3)^{(a-1)/2} E_{2k}(4z; \chi_{-3},\chi_{-4}) \right) \\
&+ a_{5,2}  \left( E_{2k}(2z; \chi_{-4},\chi_{-3}) - (-3)^{(a-1)/2} E_{2k}(2z; \chi_{12},\chi_{1}) \right)\\
&+ a_{5,4}  \left( E_{2k}(4z; \chi_{-4},\chi_{-3}) + (-3)^{(a-1)/2}  E_{2k}(4z; \chi_{12},\chi_{1}) \right)
+ C_7(z),
\nals
where
\bals
 a_{4,2} & = \frac{1}{2^{4k}},\\
 a_{4,4}& = -\frac{1}{2^{4k}},\\
 a_{5,2} & =  \frac{( -1)^{(a+3b-2)/4}}{2^{4k}} ,\\
 a_{5,4} & =  \frac{(-1)^{(a+3b-2)/4}}{2^{2k+1}}.
\nals
Also for some cusp form $C_8(z) \in M_{2k}(\Gamma_0(12),\chi_{12})$, we have
\bals
 \varphi^{a}(z)\varphi^{b}(3z) & = b_{4,1} \left( E_{2k}(z; \chi_1,\chi_{12}) + (-3)^{(a-1)/2} E_{2k}(z; \chi_{-3},\chi_{-4}) \right)\\
 & + b_{5,1}  \left( E_{2k}(z; \chi_{-4},\chi_{-3}) + (-3)^{(a-1)/2} E_{2k}(z; \chi_{12},\chi_{1}) \right)+ C_8(z),
\nals
where
\bals
b_{4,1} & = 1,\\
b_{5,1} & = - (-1)^{(a+3b-2)/4} 2^{2k-1}.
\nals
\end{proposition}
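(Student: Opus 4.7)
The plan is to decompose the products $\Psi_8^a(z)\Psi_8^b(3z)$ and $\varphi^a(z)\varphi^b(3z)$ via the direct sum $M_{2k}(\Gamma_0(N),\chi_{12}) = E_{2k}(\Gamma_0(N),\chi_{12}) \oplus S_{2k}(\Gamma_0(N),\chi_{12})$, with $N=48$ and $N=12$ respectively, both of which are valid by \eqref{mod-psi} and \eqref{mod-phi}. Since $\chi_{12}$ factors as a product of primitive Dirichlet characters in exactly four ways, namely $(\chi_{1},\chi_{12})$, $(\chi_{12},\chi_{1})$, $(\chi_{-3},\chi_{-4})$ and $(\chi_{-4},\chi_{-3})$, the basis $\mathcal{E}_{2k}(\Gamma_0(48),\chi_{12})$ described in Section \ref{modeqs} contains twelve Eisenstein series (shift parameter $d\in\{1,2,4\}$ for each factorization), while $\mathcal{E}_{2k}(\Gamma_0(12),\chi_{12})$ contains only four (only $d=1$ is admissible). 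Note that the claim asserts that the coefficient of every series with $d=1$ vanishes in the $\Psi_8$ expansion on $\Gamma_0(48)$, which becomes part of the verification.

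The central step is to apply \cite[Theorem 1]{projections} to produce the Eisenstein component of each product explicitly, exactly as in the proofs of Propositions \ref{thchi1}--\ref{thchi4}. That result expresses the Eisenstein part of an eta quotient as a computable linear combination over the above bases, with coefficients determined by evaluating the eta quotient at each cusp of $\Gamma_0(N)$, using the representations $\Psi_8^a(z)\Psi_8^b(3z) = \eta^{2a}(16z)\eta^{2b}(48z)/(\eta^{a}(8z)\eta^{b}(24z))$ and $\varphi^a(z)\varphi^b(3z) = \eta^{5a}(2z)\eta^{5b}(6z)/(\eta^{2a}(z)\eta^{2a}(4z)\eta^{2b}(3z)\eta^{2b}(12z))$, together with generalized Bernoulli numbers attached to $\chi_{1},\chi_{-3},\chi_{-4},\chi_{12}$. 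The cusp forms $C_7(z)$ and $C_8(z)$ are then defined as the differences between the original products and these Eisenstein parts, and their cuspidality is automatic once all cusp constant terms have been matched. By uniqueness of the Eisenstein projection, the final verification reduces to comparing finitely many Fourier coefficients up to the Sturm bound for the relevant spaces.

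The main obstacle is the careful bookkeeping of the sign factors $(-3)^{(a-1)/2}$ and $(-1)^{(a+3b-2)/4}$ appearing in the coefficients $a_{4,*}$, $a_{5,*}$, $b_{4,1}$ and $b_{5,1}$. The first of these arises from the accumulated character values $\chi_{-3}(3)^{(a-1)/2}$ when $\sigma_{2k-1}(\epsilon,\psi;\cdot)$ is unpacked at primes dividing $3$, while the second tracks the interaction of $\chi_{-4}$ with the shift $a+3b \pmod 8$. Distinguishing the residues of $(a-1)/2$ and $(a+3b-2)/4$ modulo $2$ is precisely what pairs the series $E_{2k}(dz;\chi_{1},\chi_{12})$ with $E_{2k}(dz;\chi_{-3},\chi_{-4})$, and $E_{2k}(dz;\chi_{-4},\chi_{-3})$ with $E_{2k}(dz;\chi_{12},\chi_{1})$, producing the asserted coefficient structure in $a,b,k$. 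Once this sign tracking is done in one case, its propagation to the four parity classes is routine, and the final coefficient match can be corroborated numerically in a computer algebra system.
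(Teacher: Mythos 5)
Your proposal is correct and follows essentially the same route as the paper: invoke \cite[Theorem 1]{projections} (equivalently, compute the constant terms of $\Psi_8^a(z)\Psi_8^b(3z)$, $\varphi^a(z)\varphi^b(3z)$, and the twelve, respectively four, basis Eisenstein series at the inequivalent cusps of $\Gamma_0(48)$ and $\Gamma_0(12)$), and define $C_7$, $C_8$ as the differences, whose cuspidality follows once all cusp constant terms match. The paper's proof is exactly this cusp-constant-term verification, carried out via \cite[Proposition 2.1]{Kohler} and \cite[Proposition 8.5.6]{cohenbook} with symbolic computation.
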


\begin{proof}[Proofs of Propositions \ref{thchi1}--\ref{thchi12}] 

Propositions \ref{thchi1}--\ref{thchi12} {are} direct {consequences} of \cite[Theorem 1]{projections}. Alternatively, observe that since the constant coefficient of any cusp form vanishes at all cusps, in order to establish these modular identities, it would be enough to compute the constant coefficients of the expansions of $\Psi_8^a(z)\Psi_8^b(3z)$, $\varphi^a(z)\varphi^b(3z)$, and the Eisenstein series in $\mathcal{E}_m(N, \chi)$, for appropriate $m$, $N$, and $\chi$, at the related cusps.

A set of inequivalent cusps of $\Gamma_0(48)$ and $\Gamma_0(12)$ are given by
\bals
R(48)=\left\{ \frac{1}{1},\frac{1}{2},\frac{1}{3},\frac{1}{4},\frac{3}{4},\frac{1}{6},\frac{1}{8},\frac{1}{12},\frac{7}{12},\frac{1}{16},\frac{1}{24},\frac{1}{48} \right\}
\nals
and
\bals
R(12)=\left\{ \frac{1}{1},\frac{1}{2},\frac{1}{3},\frac{1}{4},\frac{1}{6},\frac{1}{12} \right\},
\nals
respectively, see \cite[Proposition 6.3.22]{cohenbook}. Letting $a+b$ to be even, the computation of constant coefficients of $\varphi^a(z) \varphi^b(3z)$ and $\Psi_8^a(z) \Psi_8^b(3z)$ are straightforward using \cite[Proposition 2.1]{Kohler}. These computations are carried out using the SAGE code provided at \cite[Appendix A]{projections}, and the simplified output is as follows, where $[f]_{a/c}$ denotes the constant term of $f$ at the cusp $a/c$:
\bals
& [\varphi^a(z) \varphi^b(3z)]_{1/1}= \left(\frac{-i}{2}\right)^{(a+b)/2}\left(\frac{1}{3}\right)^{b/2},\\
& [\varphi^a(z) \varphi^b(3z)]_{1/2}=0,\\
& [\varphi^a(z) \varphi^b(3z)]_{1/3}=i^a \left(\frac{-i}{2}\right)^{(a+b)/2},\\
& [\varphi^a(z) \varphi^b(3z)]_{1/4}=\left(\frac{1}{i \sqrt{3}}\right)^b,\\
& [\varphi^a(z) \varphi^b(3z)]_{1/6}=0,\\
& [\varphi^a(z) \varphi^b(3z)]_{1/12}=1,\\
& [\Psi_8^a(z) \Psi_8^b(3z)]_{1/1}= (-1)^{(a+b)/2} \left(\frac{1+i}{8}\right)^{a+b}\left(\frac{1}{\sqrt{3}}\right)^b,\\
& [\Psi_8^a(z) \Psi_8^b(3z)]_{1/2}= (-1)^{(a+b)/2} \left(\frac{1+i}{4 \sqrt{2}}\right)^{a+b}\left(\frac{1}{\sqrt{3}}\right)^b,\\
& [\Psi_8^a(z) \Psi_8^b(3z)]_{1/3}= (-1)^{(a+b)/2} \left(\frac{i-1}{8}\right)^a \left(\frac{i+1}{8}\right)^b,\\
& [\Psi_8^a(z) \Psi_8^b(3z)]_{1/4}= (-1)^{(a+3b)/2} \left(\frac{i-1}{4}\right)^{a+b}\left(\frac{1}{\sqrt{3}}\right)^b,\\
& [\Psi_8^a(z) \Psi_8^b(3z)]_{3/4}= (-1)^{(a+3b)/2} \left(\frac{i-1}{4}\right)^{a+b}\left(\frac{1}{\sqrt{3}}\right)^b,\\
& [\Psi_8^a(z) \Psi_8^b(3z)]_{1/6}= (-1)^{(a+3b)/2} \left(\sqrt{2}\right)^{a+b}\left(\frac{i-1}{8}\right)^a\left(\frac{i+1}{8}\right)^b,\\
& [\Psi_8^a(z) \Psi_8^b(3z)]_{1/8}= (-1)^{(a+3b)/2} \left(\frac{i}{2}\right)^a\left(\frac{1}{2\sqrt{3}}\right)^b,\\
& [\Psi_8^a(z) \Psi_8^b(3z)]_{1/12}= (-1)^{(a+b)/2} \left(\frac{i+1}{4}\right)^a\left(\frac{i-1}{4}\right)^b,\\
& [\Psi_8^a(z) \Psi_8^b(3z)]_{7/12}= (-1)^{(a+b)/2} \left(\frac{i+1}{4}\right)^a\left(\frac{i-1}{4}\right)^b,\\
& [\Psi_8^a(z) \Psi_8^b(3z)]_{1/16}=0,\\
& [\Psi_8^a(z) \Psi_8^b(3z)]_{1/24}= (-1)^{(a+b)/2} \left(\frac{i}{2}\right)^{a+b},\\
& [\Psi_8^a(z) \Psi_8^b(3z)]_{1/48}= 0.
\nals

We now, assuming that $a$ and $b$ are even and $a+b=4k $, define
\bals
C_1(z):=& \Psi_8^a(z) \Psi_8^b(3z) \\
& - a_{1,4}   \left(((-3)^{a/2}-1) E_{2k}(4z; \chi_1, \chi_1) + (3^{2k}-(-3)^{a/2}) E_{2k}(12z; \chi_1, \chi_1) \right) \\
&  - a_{1,8}  \left(((-3)^{a/2}-1)E_{2k}(8z; \chi_1, \chi_1) + (3^{2k}-(-3)^{a/2}) E_{2k}(24z; \chi_1, \chi_1) \right) \\
&  - a_{1,16}  \left( ((-3)^{a/2}-1) E_{2k}(16z; \chi_1, \chi_1) + (3^{2k}-(-3)^{a/2}) E_{2k}(48z; \chi_1, \chi_1) \right),
\nals
and
\bals
 C_2(z):= & \varphi^{a}(z)\varphi^{b}(3z)\\
 & - b_{1,1}  \left( ((-3)^{a/2}-1) E_{2k}(z; \chi_1, \chi_1) + (3^{2k}-(-3)^{a/2}) E_{2k}(3z; \chi_1, \chi_1)  \right) \\
& - b_{1,2}  \left( ((-3)^{a/2}-1) E_{2k}(2z; \chi_1, \chi_1) + (3^{2k}-(-3)^{a/2}) E_{2k}(6z; \chi_1, \chi_1)  \right) \\
& - b_{1,4}  \left( ((-3)^{a/2}-1) E_{2k}(4z; \chi_1, \chi_1) + (3^{2k}-(-3)^{a/2}) E_{2k}(12z; \chi_1, \chi_1)  \right).
\nals
We have
\begin{align*}
& [E_{2k}(dz; \chi_1,\chi_1)]_{a/c}  = \ds  \left(\frac{\gcd(c,d)}{d} \right)^{2k}  \mbox{ when $ k \neq 1$,}
\end{align*}
and
\begin{align*}
& [\left( E_{2}(z; \chi_1,\chi_1) -d E_{2}(dz; \chi_1,\chi_1)\right)]_{a/c} = 1 -  \frac{\gcd(c,d)^2}{d}
\end{align*}
(see \cite[Proposition 8.5.6]{cohenbook} and \cite[Formula (6.2)]{sqfreepaper}). Now it is straightforward, by using a symbolic computation software such as MAPLE, to check $[C_1(z)]_{a/c}=0$ for all $a/c \in R(48)$ and $[C_2(z)]_{a/c}=0$ for all $a/c \in R(12)$, i.e.\ $C_1(z)$ and $C_2(z)$ are cusp forms in the desired spaces. One can do similar verifications, by consulting \cite[Proposition 8.5.6]{cohenbook} and \cite[Formula (6.2)]{sqfreepaper} for the constant coefficients of Eisenstein series,  for the similarly defined functions  $C_3(z),$ $C_4(z)$, $C_5(z),$ $C_6(z)$ $C_7(z)$, and $C_8(z)$ corresponding to different parities of $a$ and $b$.
\end{proof}

\section{Proof of Theorem \ref{alp_rels3}} \label{proof1_2}

Throughout the section we let $\epsilon$ and $\psi$ to be real primitive Dirichlet characters with conductors $L$ and $M$, respectively, where $\gcd(L,M)=1$. The proofs we present in this section relies on some elementary {properties of} the sum of divisor functions defined in \eqref{sodfnc}. First, we note that $\sigma_{k}(\epsilon,\psi; n) $ is a multiplicative function of $n \in \nn$. Let $n= \prod_{p \mid n} p^{e_p}$ be the prime decomposition of $n$, then we have
\bal
\label{multi}
\sigma_{k}(\epsilon,\psi; n)  = \prod_{p \mid n} \sigma_{k}(\epsilon,\psi; p^{e_p}) = \prod_{p \mid n} \frac{(\psi(p) p^k)^{e_p+1} - \epsilon(p)^{e_p+1}}{\psi(p) p^k - \epsilon(p)}.
\nal

Next we note down an equation whose proof is straightforward using \eqref{sodfnc}.
\begin{lemma} \label{lemma3_2} Let $e, r \in \nn$. If $e \geq r$ we have
\bals
\sigma_{k} (\epsilon,\psi;2^{e}) - \epsilon^r(2) \sigma_{k} (\epsilon,\psi;2^{e-r}) = \sum_{i=0}^{r-1} \epsilon^i(2) (\psi(2) 2^k)^{e-i}(\psi(2) 2^k)^{e}.
\nals
\end{lemma}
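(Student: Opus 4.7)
The plan is to prove the identity by direct expansion using the definition in \eqref{sodfnc} rather than the closed form in \eqref{multi}. Since the divisors of $2^{e}$ are precisely $\{2^{j}:0\le j\le e\}$, I would first write
\begin{align*}
\sigma_{k}(\epsilon,\psi;2^{e})=\sum_{j=0}^{e}\epsilon(2)^{e-j}\psi(2)^{j}2^{kj},
\end{align*}
and similarly, using $e\ge r$ so that $2^{e-r}\in\nn$,
\begin{align*}
\epsilon(2)^{r}\sigma_{k}(\epsilon,\psi;2^{e-r})=\epsilon(2)^{r}\sum_{j=0}^{e-r}\epsilon(2)^{e-r-j}\psi(2)^{j}2^{kj}=\sum_{j=0}^{e-r}\epsilon(2)^{e-j}\psi(2)^{j}2^{kj}.
\end{align*}

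After this rewriting, the second expression is literally the initial segment $0\le j\le e-r$ of the first, so subtracting gives
\begin{align*}
\sigma_{k}(\epsilon,\psi;2^{e})-\epsilon(2)^{r}\sigma_{k}(\epsilon,\psi;2^{e-r})=\sum_{j=e-r+1}^{e}\epsilon(2)^{e-j}\psi(2)^{j}2^{kj}.
\end{align*}
A re-indexing $i=e-j$ (so that $i$ runs from $0$ to $r-1$), combined with the identification $\psi(2)^{e-i}2^{k(e-i)}=(\psi(2)2^{k})^{e-i}$, then produces the claimed right-hand side.

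As a sanity check, I would also verify the identity by the alternative route of clearing the common denominator $\psi(2)2^{k}-\epsilon(2)$ in \eqref{multi}: the $\epsilon(2)^{e+1}$ contributions cancel, the factor $(\psi(2)2^{k})^{e-r+1}$ comes out of the numerator, and the remaining quotient $\bigl((\psi(2)2^{k})^{r}-\epsilon(2)^{r}\bigr)/(\psi(2)2^{k}-\epsilon(2))$ is the geometric sum $\sum_{i=0}^{r-1}(\psi(2)2^{k})^{r-1-i}\epsilon(2)^{i}$. I would nonetheless present the proof via the divisor-sum expansion, since it is uniformly valid and avoids the degenerate case $\psi(2)2^{k}=\epsilon(2)$ that the rational-function manipulation would force me to treat separately. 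There is no substantive obstacle; the only care required is the bookkeeping when splitting the divisor sum at $j=e-r$ and reindexing with $i=e-j$.
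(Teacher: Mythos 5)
Your proof is correct and is exactly the route the paper takes: the paper offers no written argument at all, stating only that the identity is ``straightforward using \eqref{sodfnc}'', and your divisor-sum expansion over $d=2^j$ with the split at $j=e-r$ and the reindexing $i=e-j$ is precisely that straightforward verification. One caveat: what you actually derive is $\sum_{i=0}^{r-1}\epsilon^i(2)\,(\psi(2)2^k)^{e-i}$, which is the intended right-hand side; the extra factor $(\psi(2)2^k)^{e}$ in the printed statement of the lemma is evidently a typographical duplication, as your geometric-series sanity check via \eqref{multi} independently confirms, so you should say explicitly that you are proving the corrected identity rather than asserting a literal match with the displayed formula.
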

We also record the following lemma which will be used later in the proof of Theorem \ref{alp_rels_2} and Lemma \ref{lem4_1}. Its proof is a direct consequence of \eqref{sodfnc} and the definition of $\chi_{-3}$ and $\chi_{-4}$.
\begin{lemma}
\label{chi-4}
(i) If $2\nmid n$, then
\bals
\sigma_{k} (\chi_{-4}, \chi_1; n)= \chi_{-4}(n) \sigma_{k}(\chi_1, \chi_{-4}; n),\\
\sigma_{k} (\chi_{-4}, \chi_{-3}; n)= \chi_{-4}(n) \sigma_{k}(\chi_1, \chi_{12}; n),
\nals
and
\bals
\sigma_{k} (\chi_{12}, \chi_{1}; n)= \chi_{-4}(n) \sigma_{k}(\chi_{-3}, \chi_{-4}; n).
\nals
(ii) If $3\nmid n$, then
\bals
\sigma_{k} (\chi_{-3}, \chi_1; n)= \chi_{-3}(n) \sigma_{k}(\chi_1, \chi_{-3}; n)
\nals
and
\bals
\sigma_{k} (\chi_{-3}, \chi_{-4}; n)= \chi_{-3}(n) \sigma_{k}(\chi_1, \chi_{12}; n).
\nals
\end{lemma}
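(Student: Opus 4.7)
The strategy is straightforward: unfold the definition in \eqref{sodfnc} and exploit the fact that a real Dirichlet character squares to the trivial character on integers coprime to its conductor. Throughout, I would use that $\chi_{-4}$ is completely multiplicative on odd integers and takes values in $\{\pm 1\}$ there, so $\chi_{-4}(d)^2=1$ whenever $d$ is odd, and analogously $\chi_{-3}(d)^2=1$ whenever $3\nmid d$.

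For part (i), assume $2\nmid n$ and begin with
$$\sigma_k(\chi_{-4},\chi_1;n)=\sum_{d\mid n}\chi_{-4}(n/d)\,d^k.$$
Every divisor $d$ of $n$ is odd, and so is $n/d$; by complete multiplicativity $\chi_{-4}(n/d)=\chi_{-4}(n)\chi_{-4}(d)$. Pulling $\chi_{-4}(n)$ out of the sum and recognizing what remains as $\sigma_k(\chi_1,\chi_{-4};n)$ yields the first identity. The second identity is the same manipulation, after observing that $\chi_{-4}(d)\chi_{-3}(d)=\chi_{12}(d)$ for every integer $d$ (the $d$ with $3\mid d$ contribute zero on both sides). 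For the third identity, I would expand $\chi_{12}(n/d)=\chi_{-3}(n/d)\chi_{-4}(n/d)$, apply the same trick to $\chi_{-4}(n/d)$ alone, and then reassemble the remaining factors as $\chi_{-3}(n/d)\chi_{-4}(d)$ to recover $\sigma_k(\chi_{-3},\chi_{-4};n)$.

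For part (ii), the proof is verbatim the same with the roles of $\chi_{-4}$ and $\chi_{-3}$ interchanged. The assumption $3\nmid n$ guarantees every divisor $d$ of $n$ is coprime to $3$, so $\chi_{-3}(n/d)=\chi_{-3}(n)\chi_{-3}(d)$ holds and $\chi_{-3}(d)^2=1$. The second identity of (ii) uses $\chi_{-3}(d)\chi_{-4}(d)=\chi_{12}(d)$ exactly as above.

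There is no real obstacle here; the only subtlety worth a sentence of bookkeeping is that characters like $\chi_{-3}$ vanish on multiples of $3$, but in every identity those divisors contribute zero simultaneously on both sides, so the reindexing is unaffected. The whole proof is thus a one-line computation per identity once the multiplicativity and $\chi^2=1$ observations are in place.
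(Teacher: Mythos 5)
Your proof is correct and is essentially identical to the paper's: both unfold the definition of $\sigma_k(\epsilon,\psi;n)$, use that $\chi_{-4}(n/d)=\chi_{-4}(n)\chi_{-4}(d)$ for odd $n$ (resp.\ the analogue for $\chi_{-3}$ when $3\nmid n$), and factor $\chi_{-4}(n)$ out of the sum. The paper writes out only the first identity and declares the rest similar, whereas you also record the factorization $\chi_{-4}\chi_{-3}=\chi_{12}$ needed for the remaining cases; this is a matter of detail, not of method.
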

\begin{proof}
We prove the first identity, the proofs for the rest are similar. First of all note that if $2\nmid n$, then $\chi_{-4}(n/d)=\chi_{-4}(n) \chi_{-4}(d)$. Thus,
\begin{multline*}
\sigma_{k}(\chi_{-4}, \chi_1; n)= \sum_{d\mid n} \chi_{-4}(n/d) \chi_1(d) d^{k}\\
= \chi_{-4}(n) \sum_{d\mid n} \chi_1(n/d) \chi_{-4}(d) d^{k}=\chi_{-4}(n) \sigma_k(\chi_1, \chi_{-4}; n).
\end{multline*}
\end{proof}

In the remainder of this section we prove Theorem \ref{alp_rels3}. Recall that
\begin{multline}
\label{psiab}
 \Psi_8^a(z) \Psi_8^b(3z)= \frac{1}{2^{a+b}} \sum_{n=0}^{\infty} N^*(a, b; 8n+a+3b) q^{8n+a+3b}\\= \sum_{n =0}^{\infty} \alpha_{8n+a+3b} q^{8n+a+3b} +  \sum_{n =0}^{\infty} \gamma_{8n+a+3b} q^{8n+a+3b},
\end{multline}
 where the sum involving $\alpha_{8n+a+3b}$'s is the Eisenstein part and the one with $\gamma_{8n+a+3b}$'s is the cusp part of  $\Psi_8^a(z) \Psi_8^b(3z)$ as they are given in Propositions \ref{thchi1}--\ref{thchi12}. Similarly
\bal
\label{phiab}
\varphi^a(z) \varphi^b(3z)=\sum_{n =0}^{\infty} N(a,b;n) q^n = \sum_{n =0}^{\infty} \beta_n q^n +  \sum_{n =0}^{\infty} \gamma_n' q^n
\nal
as they are given in Propositions \ref{thchi1}--\ref{thchi12}.

The relations between $\alpha_n$ and $\beta_n$ given in Theorem \ref{alp_rels_2} below imply the relations given in Theorem \ref{alp_rels3}. The implication will be proven at the end of this section.

\begin{theorem} \label{alp_rels_2} Let $a,b \in \nn_0$ and let $a+b \geq 4$. Then, for all $n \in \nn_0$, the following hold.
\begin{itemize}
\item[(i)] If $a+b \equiv 2 \pmod{4}$ and {$a,b$ are both} even, or $a+b \equiv 0 \pmod{4}$ and $a,b$ are both odd, then we have
\bal
\alpha_{8n+a+3b} = \frac{ 2 \beta_{8n+a+3b} }{ 2^{a+b}(2^{a+b-2}+1)}. \label{alp_rels2_2}
\nal
\item[(ii)] If $a+b \equiv 0 \pmod{4}$ and {$a,b$ are both} even, or $a+b \equiv 2 \pmod{4}$ and {$a,b$ are both} odd, then we have
\bal
\alpha_{8n+a+3b} = \frac{2(\beta_{8n+a+3b} -  \beta_{2n+(a+3b)/4})}{2^{a+b} \left(2^{a+b-2} + (-1)^b 2^{(a+b-2)/2}\cos{\left(\pi(a+3b)/4\right)} \right) }. \label{alp_rels1_1}
\nal
\end{itemize}
\end{theorem}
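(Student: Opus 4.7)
Theorem \ref{alp_rels_2} is proved by extracting $\alpha_m$ and $\beta_m$ (with $m := 8n+a+3b$) from the explicit Eisenstein decompositions in Propositions \ref{thchi1}--\ref{thchi12} and comparing them coefficient-by-coefficient. The argument partitions naturally into the four parity regimes of those propositions: Propositions \ref{thchi1} (both even, $a+b\equiv 0\pmod{4}$) and \ref{thchi3} (both odd, $a+b\equiv 2\pmod{4}$) correspond to part (ii) of the theorem, where $a+3b\equiv 0\pmod{4}$ and $\nu := v_{2}(m) \geq 2$; Propositions \ref{thchi4} (both even, $a+b\equiv 2\pmod{4}$) and \ref{thchi12} (both odd, $a+b\equiv 0\pmod{4}$) correspond to part (i), where $a+3b\equiv 2\pmod{4}$ and $\nu = 1$. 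Using the Fourier expansion $E_{k}(dz;\epsilon,\psi)=\epsilon(0)-(2k/B_{k,\chi})\sum_{n\geq 1}\sigma_{k-1}(\epsilon,\psi;n)q^{dn}$, both $\alpha_m$ and $\beta_m$ become explicit $\mathbb{Q}$-linear combinations of divisor sums $\sigma_{k-1}(\epsilon,\psi; m/d)$ with coefficients $a_{i,j}$, $b_{i,j}$ drawn directly from the relevant proposition.

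\textbf{Factorization through the prime $2$.} Write $m = 2^{\nu} m'$ with $m'$ odd. By the multiplicativity identity \eqref{multi}, each $\sigma_{k-1}(\epsilon,\psi; m/d)$ factors as a closed-form $2$-part (a geometric sum indexed by $\nu - v_{2}(d)$) times an odd part that depends only on the odd divisor of $m$. Lemma \ref{chi-4} then collapses all odd parts $\sigma_{k-1}(\chi_{-4},\chi_{1};\cdot)$, $\sigma_{k-1}(\chi_{-3},\chi_{1};\cdot)$, $\sigma_{k-1}(\chi_{12},\chi_{1};\cdot)$, etc., into one common odd divisor sum multiplied by $\chi_{-4}(m')$, $\chi_{-3}(m')$, or $\chi_{12}(m')$. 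Since $m'\equiv (a+3b)/2^{\nu}\pmod{8}$, those character values are determined by $(a+3b)\pmod{8}$ alone, and they combine with the explicit signs $(-1)^{(a+3b)/4}$ and $(-1)^{(a+3b-2)/4}$ carried by the propositions to produce precisely $(-1)^{b}\cos(\pi(a+3b)/4)\in\{0,\pm 1\}$. The common odd divisor sum thereby factors out of $\alpha_m$, $\beta_m$, and $\beta_{m/4}$, and the claimed identities reduce to finite $2$-adic computations.

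\textbf{Assembly.} In part (i), the constraint $\nu = 1$ kills every Eisenstein contribution at $E_{k}(dz;\cdot,\cdot)$ with $4\mid d$, reducing Proposition \ref{thchi4} to the $d\in\{2,6\}$ terms and Proposition \ref{thchi12} to the $d=2$ term; the surviving $2$-adic factors are trivial, and substituting $a_{3,j}, b_{3,j}$ (respectively $a_{4,j}, a_{5,j}, b_{4,1}, b_{5,1}$) yields the constant denominator $2^{a+b}(2^{a+b-2}+1)$ of \eqref{alp_rels2_2}, with the vanishing of $\cos(\pi(a+3b)/4)$ producing the lone $+1$. In part (ii), the $2$-part of $\beta_m$ involves $\sigma_{k-1}(\epsilon,\psi;2^{\nu-r})$ for $r\in\{0,1,2\}$, and the subtraction $\beta_m - \beta_{m/4}$ annihilates the $r=2$ contribution; Lemma \ref{lemma3_2} with $r=2$ evaluates the remaining combination, matching the $2$-part of $\alpha_m$ up to the factor $2^{a+b-2}+(-1)^{b}2^{(a+b-2)/2}\cos(\pi(a+3b)/4)$ appearing in the denominator of \eqref{alp_rels1_1}, independently of $\nu$.

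\textbf{Main obstacle.} The argument is elementary but bookkeeping-intensive: four parity cases, each branching on $a+3b\pmod{8}$ and on whether $3\mid m'$, with a separate verification that the $3$-adic part of the common odd divisor sum combines with the $(-3)^{a/2}$ or $(-3)^{(a-1)/2}$ prefactors to simplify cleanly. The subtlest step is the alignment of the character-twist signs $\chi_{-4}(m'), \chi_{-3}(m'), \chi_{12}(m')$ with the explicit signs in the propositions to deliver exactly $(-1)^{b}\cos(\pi(a+3b)/4)$; once this alignment is checked in each parity regime, the remaining manipulations are routine and require no further modular-forms input beyond what is already established in Section \ref{sectionfour}.
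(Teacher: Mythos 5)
Your proposal follows essentially the same route as the paper's proof: both extract $\alpha_m$ and $\beta_m$ from the explicit Eisenstein decompositions of Propositions \ref{thchi1}--\ref{thchi12} in the same four parity regimes, factor out the common odd divisor sum via the multiplicativity identity \eqref{multi} and the character-twist relations of Lemma \ref{chi-4}, and reduce both identities to a finite $2$-adic comparison of the coefficients $a_{i,j}$ and $b_{i,j}$ using Lemma \ref{lemma3_2}, with the case split on $a+3b \pmod 8$ accounting for the sign $(-1)^b\cos(\pi(a+3b)/4)$. The only minor imprecision is your claim that the subtraction $\beta_m-\beta_{m/4}$ ``annihilates the $r=2$ contribution'': in the paper this subtraction instead telescopes each $2$-power term into a difference $\sigma_{k}(\epsilon,\psi;2^{e})-\sigma_{k}(\epsilon,\psi;2^{e-2})$ that Lemma \ref{lemma3_2} with $r=2$ evaluates in closed form, but this does not affect the substance or correctness of the approach.
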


\begin{proof}
We start by proving \eqref{alp_rels1_1} when $a+b \equiv 0 \pmod{4}$ and $a,b$ are both even. In this case $4 \mid a+3b$. Therefore if we let $8n+a+3b = 2^{e_2} n_2$ with $\gcd(n_2,2)=1$, then $e_2 \geq 2$. The cases $e_2=2$, $e_2=3$ and $e_2 \geq 4$ needs to be treated individually. We only give the details of the case when $e_2 \geq 4$, as the remaining two cases can be handled similarly. We start the proof by noting that when $e_2 \geq 4$, we have
\begin{equation}
\label{recall}
(-1)^{(a+3b)/4} = 1.
\end{equation}
Hence, in Proposition \ref{thchi1},   $a_{1,4}=0$. Moreover, by employing Lemma \ref{lemma3_2} for $r=4$ and the fact that
\bals
a_{1,8}+a_{1,16}=0
\nals
in Proposition \ref{thchi1} we obtain
\bal
\frac{B_{2k}}{-4k}\alpha_{8n+a+3b} & = \left( a_{1,8} \sigma_{2k-1}(\chi_1,\chi_1; 2^{e_2-3}) + a_{1,16} \sigma_{2k-1}(\chi_1,\chi_1; 2^{e_2-4})  \right) \nonumber \\
& \times \left(((-3)^{a/2}-1) \sigma_{2k-1}(\chi_1,\chi_1; n_2) + (3^{2k}-(-3)^{a/2}) \sigma_{2k-1}(\chi_1,\chi_1;n_2/3) \right), \nonumber 
\nal
which, by \eqref{multi}, is
\bal
 & = a_{1,8}  (2^{2k-1})^{e_2-3}  \nonumber \\
& \times  \left(((-3)^{a/2}-1) \sigma_{2k-1}(\chi_1,\chi_1; n_2) + (3^{2k}-(-3)^{a/2}) \sigma_{2k-1}(\chi_1,\chi_1;n_2/3) \right), \label{eq3_1}
\nal
and
\bal
& \frac{B_{2k}}{-4k}(\beta_{8n+a+3b} - \beta_{2n+(a+3b)/4} ) \nonumber \\
& =\left( b_{1,1}  (\sigma_{2k-1}(\chi_1,\chi_1; 2^{e_2}) - \sigma_{2k-1}(\chi_1,\chi_1; 2^{e_2-2})) \right.  \nonumber \\
& + b_{1,2} ( \sigma_{2k-1}(\chi_1,\chi_1; 2^{e_2-1}) - \sigma_{2k-1}(\chi_1,\chi_1; 2^{e_2-3}))  \nonumber \\
& + \left. b_{1,4} (\sigma_{2k-1}(\chi_1,\chi_1; 2^{e_2-2}) - \sigma_{2k-1}(\chi_1,\chi_1; 2^{e_2-4}))  \right) \nonumber \\
& \times \left(((-3)^{a/2}-1) \sigma_{2k-1}(\chi_1,\chi_1; n_2) + (3^{2k}-(-3)^{a/2}) \sigma_{2k-1}(\chi_1,\chi_1;n_2/3) \right),  \nonumber 
\nal
which, by \eqref{multi}, is
\bal
& = (2^{2k-1} + 1) \left( b_{1,1} 2^{4k-2} + b_{1,2} 2^{2k-1} +  b_{1,4}   \right) (2^{2k-1})^{e_2-3}  \nonumber \\
& \times \left(((-3)^{a/2}-1) \sigma_{2k-1}(\chi_1,\chi_1; n_2) + (3^{2k}-(-3)^{a/2}) \sigma_{2k-1}(\chi_1,\chi_1;n_2/3) \right). \label{eq3_2}
\nal
Further, using \eqref{recall}, we observe that 
\bal
(2^{2k-1} + 1) \left( b_{1,1} 2^{4k-2} + b_{1,2} 2^{2k-1} +  b_{1,4}   \right) & = a_{1,8} 2^{6k-2}(2^{2k-1}+1).  \label{eq3_3}
\nal
Now, \eqref{alp_rels1_1} follows by combining \eqref{eq3_1}--\eqref{eq3_3} and noting that $k=(a+b)/4$.

The proof when $a+b \equiv 2 \pmod{4}$ and $a,b$ are both odd is similar to the above proof by employing Proposition \ref{thchi3}, so we skip it here.

We next prove \eqref{alp_rels2_2} when $a,b$ are both odd and 
$a+b \equiv 0 \pmod{4}$.
In this case $2~\Vert~ a+3b$. Therefore we have $8n+a+3b = 2 n_2$ with $\gcd(n_2,2)=1$. 
Thus, $\chi_{-4}(n_2)=(-1)^{(a+3b-2)/4}$ and so by Lemma \ref{chi-4} (i) we have 
\bal
\sigma_{2k-1}( \chi_{-4},\chi_{-3}; n_2)= (-1)^{(a+3b-2)/4}\sigma_{2k-1}( \chi_1,\chi_{12}; n_2) \label{bal1}
\nal
and
\bal
\sigma_{2k-1}( \chi_{12},\chi_{1}; n_2)= (-1)^{(a+3b-2)/4}\sigma_{2k-1}( \chi_{-3},\chi_{-4}; n_2).\label{bal2}
\nal
These identities together with Proposition \ref{thchi12} yield
\bal
\frac{B_{2k,\chi_{-12}}}{-4k} \alpha_{8n+a+3b} & = a_{4,2} \left( \sigma_{2k-1}(\chi_1,\chi_{12}; n_2) - (-3)^{(a-1)/2} \sigma_{2k-1}(\chi_{-3},\chi_{-4}); n_2 \right)  \nonumber \\
& \quad + a_{5,2}  \left( \sigma_{2k-1}(\chi_{-4},\chi_{-3}; n_2) - (-3)^{(a-1)/2} \sigma_{2k}(\chi_{12},\chi_{1};n_2) \right) \nonumber \\
& = \left( a_{4,2}  + a_{5,2} (-1)^{(a+3b-2)/4} \right)  \nonumber \\
&  \quad \times \left( \sigma_{2k-1}( \chi_1,\chi_{12}; n_2) -  (-3)^{(a-1)/2} \sigma_{2k-1}(\chi_{-3},\chi_{-4}; n_2) \right),  \nonumber 
\nal 
which is
\bal
& = \left( \frac{1}{2^{4k-1}} \right) \left( \sigma_{2k-1}( \chi_1,\chi_{12}; n_2) -  (-3)^{(a-1)/2} \sigma_{2k-1}(\chi_{-3},\chi_{-4}; n_2) \right). \label{eq3_9}
\nal
Next we use \eqref{multi} and employ the values
\bals
& \sigma_{2k-1}(\chi_1,\chi_{12}; 2) =1,\\
& \sigma_{2k-1}(\chi_{-3},\chi_{-4}; 2)=-1,\\
& \sigma_{2k-1}(\chi_{-4},\chi_{-3}; 2 )=-2^{2k-1},\\
& \sigma_{2k-1}(\chi_{12},\chi_{1}; 2)=2^{2k-1},
\nals
in Proposition \ref{thchi12} to obtain
\bal
\frac{B_{2k,\chi_{-12}}}{-4k} \beta_{8n+a+3b} & = b_{4,1} \left( \sigma_{2k-1}(\chi_1,\chi_{12}; 2 n_2) + (-3)^{(a-1)/2} \sigma_{2k-1}(\chi_{-3},\chi_{-4}; 2 n_2) \right) \nonumber \\
 & \quad  + b_{5,1}  \left( \sigma_{2k-1}(\chi_{-4},\chi_{-3}; 2 n_2) + (-3)^{(a-1)/2} \sigma_{2k-1}(\chi_{12},\chi_{1}; 2 n_2) \right) \nonumber \\
& = b_{4,1} \left(  \sigma_{2k-1}(\chi_1,\chi_{12}; n_2) - (-3)^{(a-1)/2} \sigma_{2k-1}(\chi_{-3},\chi_{-4}; n_2) \right) \nonumber \\
& \quad  - b_{5,1}  2^{2k-1} \left( \sigma_{2k-1}(\chi_{-4},\chi_{-3}; n_2) - (-3)^{(a-1)/2} \sigma_{2k-1}(\chi_{12},\chi_{1}; n_2) \right), \nonumber 
\nal
which, by \eqref{bal1} and \eqref{bal2}, is
\bal
& = \left( b_{4,1}  - b_{5,1} 2^{2k-1} (-1)^{(a+3b-2)/4} \right)  \nonumber \\
& \quad \times \left( \sigma_{2k-1}( \chi_1,\chi_{12}; n_2) - (-3)^{(a-1)/2} \sigma_{2k-1}( \chi_{-3},\chi_{-4}; n_2) \right), \nonumber
\nal
where, by Proposition \ref{thchi12}, is 
\bal
& = \left( 2^{4k-2} + 1 \right) \left( \sigma_{2k-1}( \chi_1,\chi_{12}; n_2) - (-3)^{(a-1)/2} \sigma_{2k-1}( \chi_{-3},\chi_{-4}; n_2) \right). \label{eq3_10}
\nal
Now we combine \eqref{eq3_9} and \eqref{eq3_10} to obtain \eqref{alp_rels2_2} in this case.

The proof of \eqref{alp_rels2_2} when $a,b$ are both even and $ a+b\equiv 2 \pmod{4}$, via Proposition \ref{thchi4}, is similar to the case just discussed.
\end{proof}

The next lemma will enable us to simplify \eqref{alp_rels1_1} in certain cases.

\begin{lemma} \label{lem_b} 
Let $a,b \in \nn$ { be} such that $a+b \equiv 0 \pmod{2}$ and $4\mid a+3b$. The following statements hold.
\begin{itemize}
\item[(i)] If $a+3b \equiv 4 \pmod{8}$ then, for all $n\in \mathbb{N}_0$, we have
\bals
\beta_{(8n+a+3b)/4}= \frac{ \beta_{8n+a+3b}}{{{ 2^{a+b-2}} +(-1)^b 2^{(a+b-2)/2} \cos{\left(\pi(a+3b)/4\right)} + 1 }}.
\nals
\item[(ii)] If $a+3b \equiv 0 \pmod{8}$ and $2^\nu~\Vert~8n+a+3b$, then
\bals
\beta_{(8n+a+3b)/4}=\frac{\ds -2+\sum_{j=0}^{\nu-2} (-1)^{bj}2^{j(a+b-2)/2}}{\ds -2+\sum_{j=0}^{\nu} (-1)^{bj} 2^{j(a+b-2)/2}} \beta_{8n+a+3b}.
\nals

\end{itemize}

\end{lemma}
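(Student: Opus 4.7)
The plan is to derive both parts directly from the explicit Eisenstein decompositions of $\varphi^a(z)\varphi^b(3z)$ given in Propositions \ref{thchi1}--\ref{thchi12}, together with the multiplicativity identity \eqref{multi} for the generalized divisor sums. Writing $8n+a+3b=2^\nu n_2$ with $n_2$ odd, the hypothesis $4\mid a+3b$ forces $\nu\geq 2$, so part (i) is the case $\nu=2$ and part (ii) is the case $\nu\geq 3$; in both cases $(8n+a+3b)/4=2^{\nu-2}n_2$.

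Under the combined hypotheses $a+b\equiv 0\pmod 2$ and $a+3b\equiv 0\pmod 4$ (equivalently $a\equiv b\pmod 4$) only two of the four parity classes can occur: either $a,b$ are both even, in which case $a+b\equiv 0\pmod 4$ and Proposition \ref{thchi1} applies, or $a,b$ are both odd, in which case $a+b\equiv 2\pmod 4$ and Proposition \ref{thchi3} applies. I would treat these two cases in parallel. In each, $\beta_m$ is a fixed linear combination of values $\sigma_{\kappa}(\epsilon,\psi;m/d)$ for $d\in\{1,2,3,4,6,12\}$ with $\kappa=(a+b-2)/2$ and characters drawn from $\chi_1$ (case one) or $\{\chi_1,\chi_{-3}\}$ (case two). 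By \eqref{multi}, each such divisor sum factors as a 2-adic factor $\sigma_{\kappa}(\epsilon,\psi;2^{\nu-s})$, where $2^s$ is the 2-part of $d$, times an $n_2$-dependent factor.

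The crucial observation is that the $n_2$-dependent factors are identical in $\beta_{2^\nu n_2}$ and $\beta_{2^{\nu-2}n_2}$ and hence cancel in the ratio $\beta_{m/4}/\beta_m$. Using $\chi_1(2)=1$ and $\chi_{-3}(2)=-1$, each 2-adic sum is the elementary geometric series
\[
\sigma_{\kappa}(\epsilon,\psi;2^e)=\sum_{i=0}^{e}\epsilon(2)^{e-i}\psi(2)^{i}2^{i\kappa},
\]
so the ratio of interest reduces to a ratio of such sums in the variable $\nu$. The exponent $(-1)^{bj}$ in the statement is precisely $\psi(2)^j$: it equals $+1$ in case one ($b$ even, $\psi=\chi_1$) and $(-1)^j$ in case two ($b$ odd, $\psi=\chi_{-3}$).

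For part (i), setting $\nu=2$ collapses the 2-adic factor of $\beta_{n_2}$ to the coefficient of $\sigma_\kappa(\cdot;2^0)=1$; rearranging the resulting three-term combination of $b_{1,\ast}$ (respectively $b_{2,\ast}$) coefficients yields the denominator $2^{a+b-2}-(-1)^b 2^{(a+b-2)/2}+1$, which matches the statement via $\cos(\pi(a+3b)/4)=-1$ for $a+3b\equiv 4\pmod 8$. For part (ii), applying the closed form $\sum_{i=0}^e x^i=(x^{e+1}-1)/(x-1)$ with $x=\pm 2^\kappa$ and collecting terms yields exactly $(-2+\sum_{j=0}^{\nu-2}(-1)^{bj}2^{j\kappa})/(-2+\sum_{j=0}^{\nu}(-1)^{bj}2^{j\kappa})$ after cancellation of the common denominator $x-1$. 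The main obstacle will be the algebraic bookkeeping showing that the specific three-term combinations rearrange into this clean closed form with the constant $-2$; this should follow by direct computation using the explicit values of $b_{1,\ast}$ and $b_{2,\ast}$, noting in particular that in part (ii) one has $(-1)^{(a+3b)/4}=1$ (which simplifies the coefficients), while in part (i) one has $(-1)^{(a+3b)/4}=-1$, which forces $b_{1,2}=0$ (or $b_{2,2}=0$) and reduces the combination to two terms. An argument analogous to the proof of Theorem \ref{alp_rels_2}, employing Lemma \ref{lemma3_2}, should then complete the verification in both cases.
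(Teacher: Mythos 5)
Your proposal is correct and follows essentially the same route as the paper: both arguments split into the two admissible parity classes ($a,b$ both even via Proposition \ref{thchi1}, both odd via Proposition \ref{thchi3}), factor each generalized divisor sum through \eqref{multi} into a $2$-adic geometric sum times an odd-part factor that is common to $\beta_{8n+a+3b}$ and $\beta_{(8n+a+3b)/4}$ (and so cancels, since $\nu$ and $\nu-2$ have the same parity), and then evaluate the resulting combination of the $b_{i,\ast}$ coefficients, using $(-1)^{(a+3b)/4}=-1$ in part (i) and $=1$ in part (ii). The closed forms you assert for the $2$-adic combinations are the ones the paper obtains, so only the routine verification you already flag remains.
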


\begin{proof} (i) Let $a,b \in \nn$ { be} such that $a+b \equiv 0 \pmod{2}$. Then $a+3b \equiv 4 \pmod{8}$ only if either $a+b \equiv 0 \pmod{4}$ and $a,b \equiv 0 \pmod{2}$, or $a+b \equiv 2 \pmod{4}$ and $a,b \equiv 1 \pmod{2}$. We write $8n+a+3b=4 n_2$, where $\gcd(2,n_2)=1$. Note that
\bal
(-1)^{(a+3b)/4}=-1.
\label{minus1}
\nal

We consider the case that $a+b \equiv 2 \pmod{4}$ and $a,b \equiv 1 \pmod{2}$, the proof in the other case is similar (using  Proposition \ref{thchi1}). Note that, by applying \eqref{minus1} in Proposition \ref{thchi3}, 
\bals
& \frac{B_{2k+1,\chi_{-3}}}{-(4k+2)} \beta_{8n+a+3b} \\
& = b_{2,1} \left( \sigma_{2k}(\chi_{1},\chi_{-3};4n_2) + (-3)^{(a-1)/2} \sigma_{2k}(\chi_{-3},\chi_{1});4n_2 \right) \\
 & + b_{2,2} \left( \sigma_{2k}(\chi_{1},\chi_{-3};2n_2) + (-3)^{(a-1)/2} \sigma_{2k}(\chi_{-3},\chi_{1};2n_2) \right)\\
 & + b_{2,4} \left( \sigma_{2k}(\chi_{1},\chi_{-3};n_2) + (-3)^{(a-1)/2} \sigma_{2k}(\chi_{-3},\chi_{1};n_2) \right),
\nals
where, by \eqref{multi}, is
\bals
& = \left( b_{2,1} ({ 2^{4k}}-2^{2k}+1) + b_{2,2} (1-2^{2k}) + b_{2,4} \right) \\
& \times \left(   \sigma_{2k}(\chi_{1},\chi_{-3};n_2) + (-3)^{(a-1)/2} \sigma_{2k}(\chi_{-3},\chi_{1});n_2 \right),
\nals
and, by \eqref{sodfnc},
\bals
& \frac{B_{2k+1,\chi_{-3}}}{-(4k+2)} \beta_{(8n+a+3b)/4} \\
& = b_{2,1} \left( \sigma_{2k}(\chi_{1},\chi_{-3};n_2) + (-3)^{(a-1)/2} \sigma_{2k}(\chi_{-3},\chi_{1});n_2 \right).
\nals

Hence we have
\bals
\beta_{(8n+a+3b)/4}& = \frac{b_{2,1}}{b_{2,1} ({ 2^{4k}}-2^{2k}+1) + b_{2,2} (1-2^{2k}) + b_{2,4} } \beta_{8n+a+3b}\\
& = \frac{\beta_{8n+a+3b}}{ { 2^{4k}} + 2^{2k} + 1 },
\nals
where $k=(a+b-2)/4$.

(ii) We note that in view  of $a+b \equiv 0 \pmod{2}$, we have $a+3b \equiv 0 \pmod{8}$ only if either $a+b \equiv 0 \pmod{4}$ and $a,b \equiv 0 \pmod{2}$, or $a+b \equiv 2 \pmod{4}$ and $a,b \equiv 1 \pmod{2}$. We write $8n+a+3b=2^\nu n_2$, where $\nu\geq 3$ and $\gcd(2,n_2)=1$. Observe that 
\bal
(-1)^{(a+3b)/4}=1.
\label{plus1}
\nal
We consider the case that $a+b \equiv 2 \pmod{4}$ and $a,b \equiv 1 \pmod{2}$, the proof in the other case is similar (using Proposition \ref{thchi1}). Now by employing \eqref{plus1} in Proposition \ref{thchi3} and steps similar to part (i) we get 
\bals
& \frac{B_{2k+1,\chi_{-3}}}{-(4k+2)} \beta_{8n+a+3b} \\
&= \left( b_{2,1} \sum_{j=0}^{\nu} (-1)^j (2^{2k})^j + b_{2,2}  \sum_{j=0}^{\nu-1} (-1)^j (2^{2k})^j + b_{2,4} \sum_{j=0}^{\nu-2} (-1)^j (2^{2k})^j  \right) \\
& \times \left(   \sigma_{2k}(\chi_{1},\chi_{-3};n_2) - (-3)^{(a-1)/2} \sigma_{2k}(\chi_{-3},\chi_{1});n_2 \right),
\nals
and 
\bals
& \frac{B_{2k+1,\chi_{-3}}}{-(4k+2)} \beta_{(8n+a+3b)/4} \\
&= \left( b_{2,1} \sum_{j=0}^{\nu-2} (-1)^j (2^{2k})^j + b_{2,2}  \sum_{j=0}^{\nu-3} (-1)^j (2^{2k})^j + b_{2,4} \sum_{j=0}^{\nu-4} (-1)^j (2^{2k})^j  \right) \\
& \times \left(   \sigma_{2k}(\chi_{1},\chi_{-3};n_2) - (-3)^{(a-1)/2} \sigma_{2k}(\chi_{-3},\chi_{1});n_2 \right),
\nals
where $\sum_{j=0}^{\nu-4}$ is zero if $\nu=3$. Now the result follows by comparing the above two formulas using the values of $b_{2, 1}$, $b_{2, 2}$, and $b_{2, 4}$  given in Proposition \ref{thchi3}. Note that $k=(a+b-2)/4$.
\end{proof}

\begin{proof}[Proof of Theorem \ref{alp_rels3} (Second Version)] 
(i) For even $a+b$, we consider cases $a+ 3b \equiv 2 \pmod{4}$ and  $a+3b \equiv 4\pmod{8}$ separately.

The condition $a+3b \equiv 2 \pmod{4}$ is possible only if either $a+b \equiv 2 \pmod{4}$ and $a,b \equiv 0 \pmod{2}$, or $a+b \equiv 0 \pmod{4}$ and $a,b \equiv 1 \pmod{2}$. Then the result in part (i) of Theorem \ref{alp_rels3} in this case follows from Theorem \ref{alp_rels_2} (i) and $\cos(\pi (a+3b)/4)=0$.

On the other hand $a+3b \equiv 4 \pmod{8}$ is possible only if either $a+b \equiv 0 \pmod{4}$ and $a,b \equiv 0 \pmod{2}$, or $a+b \equiv 2 \pmod{4}$ and $a,b \equiv 1 \pmod{2}$. Then the result in part (i) of Theorem \ref{alp_rels3}  in this case follows by employing Lemma \ref{lem_b} (i)
in Theorem \ref{alp_rels_2} (ii). 

(ii) The condition $a+3b \equiv 0 \pmod{8}$ is possible only if either $a+b \equiv 0 \pmod{4}$ and $a,b \equiv 0 \pmod{2}$, or $a+b \equiv 2 \pmod{4}$ and $a,b \equiv 1 \pmod{2}$. Then the result in part (ii) of Theorem \ref{alp_rels3}  follows by employing Lemma \ref{lem_b} (ii)
in Theorem \ref{alp_rels_2} (ii).

(iii) We note that $a+3b \equiv 0 \pmod{4}$ is possible only if either $a+b \equiv 0 \pmod{4}$ and $a,b \equiv 0 \pmod{2}$, or $a+b \equiv 2 \pmod{4}$ and $a,b \equiv 1 \pmod{2}$. Then the result in part (iii) of Theorem \ref{alp_rels3}
in this case follows from Theorem \ref{alp_rels_2} (ii).
\end{proof}

\section{Proof of Theorem \ref{mainth}} \label{proof1_1}

In this section we show that when $\alpha_{8n+a+3b}$ is nonzero then it is asymptotically larger than $O(n^{(a+b-1)/4})$, more precisely, in Lemma \ref{lem4_1} we prove that $n^{(a+b-1)/4}=o(\alpha_{8n+a+3b})$, as $n\rightarrow\infty$, whenever $a>1$ and $a+b\geq 4$. We then use this fact to finish the proof of Theorem \ref{mainth}. We start by establishing a uniform lower bound for our generalized divisor functions.

\begin{lemma} \label{lem_10}
Let $\epsilon, \psi \in \{\chi_1, \chi_{-3}, \chi_{-4}, \chi_{12}\}$ be two  Dirichlet characters with coprime conductors. Let $\omega(n)$ denote the number of prime divisors of 
$n= \prod_{p\mid n} p^{e_p}$. Then, for integer $k\geq 1$ and non-negative integers $n$ such that ${\rm gcd}(n, 6)=1$, we have
\bals
\lvert \sigma_{k}(\epsilon,\psi;n) \rvert \geq n^k \left( \frac{3}{5} \right)^{\omega(n)}.
\nals
\end{lemma}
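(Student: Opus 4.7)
The plan is to exploit the multiplicativity of $\sigma_k(\epsilon,\psi;\cdot)$ recorded in \eqref{multi} and reduce the lower bound to a per‑prime estimate. Since $\gcd(n,6)=1$, every prime divisor $p$ of $n$ satisfies $p\geq 5$, and since the conductors of the characters in $\{\chi_1,\chi_{-3},\chi_{-4},\chi_{12}\}$ are all divisors of $12$, both $\epsilon(p)$ and $\psi(p)$ lie in $\{-1,1\}$ for every such $p$. Thus the quantities $\alpha:=\psi(p)p^k$ and $\beta:=\epsilon(p)$ appearing in \eqref{multi} are distinct real numbers with $|\alpha|=p^k$ and $|\beta|=1$, so that
$$
\sigma_k(\epsilon,\psi;p^{e})=\frac{\alpha^{e+1}-\beta^{e+1}}{\alpha-\beta}.
$$

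First, I would bound the numerator and denominator separately by the triangle inequality:
$$
|\alpha^{e+1}-\beta^{e+1}|\geq p^{k(e+1)}-1,\qquad |\alpha-\beta|\leq p^k+1.
$$
Therefore $|\sigma_k(\epsilon,\psi;p^{e})|\geq (p^{k(e+1)}-1)/(p^k+1)$. Next, I would observe that $p^{k(e+1)}-1\geq p^{ke}(p^k-1)$ (which is just $p^{ke}\geq 1$), so
$$
|\sigma_k(\epsilon,\psi;p^{e})|\geq p^{ke}\cdot\frac{p^k-1}{p^k+1}.
$$

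The final step is to check that $(p^k-1)/(p^k+1)\geq 3/5$ whenever $p^k\geq 4$, which is immediate since it rearranges to $2p^k\geq 8$. As $p\geq 5$ and $k\geq 1$, this hypothesis always holds, yielding $|\sigma_k(\epsilon,\psi;p^{e})|\geq \tfrac{3}{5}p^{ke}$. Taking the product over $p\mid n$ via multiplicativity gives
$$
|\sigma_k(\epsilon,\psi;n)|=\prod_{p\mid n}|\sigma_k(\epsilon,\psi;p^{e_p})|\geq\prod_{p\mid n}\frac{3}{5}p^{ke_p}=n^k\Bigl(\frac{3}{5}\Bigr)^{\omega(n)},
$$
as claimed. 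There is no real obstacle here: the argument is a clean elementary estimate, and the only subtlety is ensuring that $\alpha\neq\beta$ (so the closed form is valid) and that $\epsilon(p),\psi(p)\in\{\pm 1\}$ (so the numerator and denominator have the expected moduli), both of which follow from $\gcd(n,6)=1$ together with the restricted list of conductors.
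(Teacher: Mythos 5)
Your proof is correct and follows essentially the same route as the paper: both arguments reduce to a per-prime-power estimate via the multiplicativity in \eqref{multi} and then use $p\geq 5$ together with the fact that $\epsilon(p),\psi(p)\in\{\pm 1\}$ to get the constant $3/5$. The only cosmetic difference is that you bound the numerator and denominator of the Euler factor separately by the triangle inequality (obtaining the slightly sharper per-prime constant $(p^k-1)/(p^k+1)$), whereas the paper factors out $n^k$ and estimates the resulting geometric sum.
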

\begin{proof}
We have
\bals
\lvert \sigma_k(\epsilon,\psi;n) \rvert &= \left\vert \prod_{p \mid n} \frac{ \left( \psi(p) p^k \right)^{e_p+1} - \left( \epsilon(p) \right)^{e_p+1}  }{ \psi(p) p^k  - \epsilon(p) } \right\vert 
= n^k  \left\vert \prod_{p \mid n} \frac{ 1 - \left( \frac{\epsilon(p)}{\psi(p) p^k} \right)^{e_p+1}  }{ 1  - \frac{\epsilon(p)}{\psi(p) p^k} } \right\vert \\
& \geq n^k  \prod_{p \mid n} \left(1 - \frac{1}{p^k} - \frac{1}{p^{2k}} - \cdots\right)\left(1-\frac{1}{p^k} \right)
 = n^k  \prod_{p \mid n} \frac{p^k-2}{p^k} \\
& \geq n^k  \prod_{p \mid n} \frac{3}{5} = n^k \left( \frac{3}{5} \right)^{\omega(n)}.
\nals\end{proof}

From now on we let $8n+a+3b=2^{e_2} 3^{e_3} n_{2,3}$ with $\gcd(n_{2,3},6)=1$. The following statement is a direct consequence of Lemma \ref{lem_10}.
\begin{lemma} \label{lem_11} Let $a+b \geq4$ be even and let $\epsilon$ and $\psi$ be as given in Lemma \ref{lem_10}. Then  we have
\bals
& \lim_{n \rightarrow \infty} \left\vert \frac{ 2^{e_2(a+b-2)/2} 3^{e_3(a+b-2)/2} \sigma_{\frac{a+b-2}{2}}(\epsilon,\psi; n_{2,3})}{n^{(a+b-1)/4}} \right\vert = \infty.
\nals
\end{lemma}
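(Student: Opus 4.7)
The plan is to replace the prefactors $2^{e_2(a+b-2)/2}3^{e_3(a+b-2)/2}$ by a power of $8n+a+3b$ via the decomposition $8n+a+3b=2^{e_2}3^{e_3}n_{2,3}$, and then invoke Lemma \ref{lem_10} to produce a clean polynomial lower bound for the numerator. Setting $k=(a+b-2)/2$, the identity
\bals
2^{e_2 k}\,3^{e_3 k}\,n_{2,3}^{\,k}=(8n+a+3b)^{k}
\nals
together with Lemma \ref{lem_10} applied to $\sigma_k(\epsilon,\psi;n_{2,3})$ (noting that $\gcd(n_{2,3},6)=1$, so its hypothesis is met) gives, for $n\ge 0$,
\bals
\bigl\lvert 2^{e_2 k}\,3^{e_3 k}\,\sigma_{k}(\epsilon,\psi;n_{2,3})\bigr\rvert
\ \ge\ (8n+a+3b)^{(a+b-2)/2}\!\left(\tfrac{3}{5}\right)^{\omega(n_{2,3})}.
\nals

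Next I would show that the right-hand side, divided by $n^{(a+b-1)/4}$, tends to infinity. Since $a+b\ge 4$, a direct computation yields
\bals
\frac{a+b-2}{2}-\frac{a+b-1}{4}=\frac{a+b-3}{4}>0,
\nals
so the pure power factor $(8n+a+3b)^{(a+b-2)/2}/n^{(a+b-1)/4}$ is bounded below by a positive multiple of $n^{(a+b-3)/4}$, which grows like a positive power of $n$.

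The only point requiring care is the harmless but non-monotone factor $(3/5)^{\omega(n_{2,3})}$. Here I would quote the classical bound $\omega(m)=O(\log m/\log\log m)$, which gives $(3/5)^{\omega(n_{2,3})}=n_{2,3}^{o(1)}\ge (8n+a+3b)^{-\epsilon}$ for any fixed $\epsilon>0$ and all sufficiently large $n$. Choosing any $\epsilon<(a+b-3)/4$, the overall lower bound becomes a positive constant times $n^{(a+b-3)/4-\epsilon\,(1+o(1))}$, which still tends to infinity. This is the only delicate step, and its delicacy is superficial: the slow (sub-polynomial) growth of $\omega$ cannot defeat the polynomial gain $(a+b-3)/4$ coming from the exponent comparison, so the claimed limit follows. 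No genuine obstacle is expected.
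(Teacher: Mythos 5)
Your proposal is correct and follows essentially the same route as the paper: the paper's proof simply cites Robin's explicit bound $\omega(m)\leq 1.38402\,\log m/\log\log m$ together with Lemma \ref{lem_10} and the condition $a+b\geq 4$, and your argument fills in exactly those details (the exponent gap $(a+b-2)/2-(a+b-1)/4=(a+b-3)/4>0$ and the sub-polynomial decay of $(3/5)^{\omega(n_{2,3})}$). No issues.
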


\begin{proof} We note that by \cite[Theorem 11]{robin} for $n \geq 3$ we have
\bals
\omega(n) \leq 1.38402 \cdot \frac{\log n}{\log \log n}.
\nals
Now the proof is straightforward using this inequality, Lemma \ref{lem_10}, and the condition $a+b\geq4$.
\end{proof}

\begin{lemma} \label{lem4_1} Let $1 < a \in \nn$, $b \in \nn_0$ with $a+b \geq 4$ even. Then we have
\bals
\lim_{n \rightarrow \infty} \frac{n^{(a+b-1)/4}}{\alpha_{8n+a+3b}}=0.
\nals
\end{lemma}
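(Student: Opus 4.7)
The strategy is to show that $|\alpha_{8n+a+3b}|$ grows at least like $n^{(a+b-2)/2 - \epsilon}$ for any $\epsilon>0$, which dominates $n^{(a+b-1)/4}$ since $(a+b-2)/2 > (a+b-1)/4$ precisely when $a+b>3$, i.e.\ $a+b\geq 4$.

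First I would split into the four parity cases of $(a,b)$, matching Propositions \ref{thchi1}--\ref{thchi12}. In each case, $\alpha_{8n+a+3b}$ is an explicit linear combination of generalized divisor sums $\sigma_{k-1}(\epsilon,\psi;(8n+a+3b)/d)$ with $d\in\{2,4,6,8,12,16\}$. Writing $8n+a+3b=2^{e_2} 3^{e_3} n_{2,3}$ with $\gcd(n_{2,3},6)=1$, I would use the multiplicativity \eqref{multi} to factor each such divisor sum as a product of its values at the prime powers $2^{e_2-v_2(d)}$, $3^{e_3-v_3(d)}$, and at $n_{2,3}$.

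Second, summing these contributions over $d$ should collapse (as in the proof of Theorem \ref{alp_rels_2}, via the identity in Lemma \ref{lemma3_2} applied at $p=2$ and the analogous computation at $p=3$) into an expression of the shape
\begin{equation*}
\alpha_{8n+a+3b}\;=\;A_{a,b}(e_2,e_3)\cdot 2^{e_2(a+b-2)/2}\, 3^{e_3(a+b-2)/2}\cdot \sigma_{(a+b-2)/2}(\epsilon,\psi;n_{2,3}),
\end{equation*}
where $A_{a,b}(e_2,e_3)$ is a bounded rational factor that, crucially, is bounded \emph{away from zero} uniformly in $(e_2,e_3)$ whenever $a>1$. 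The verification of this nonvanishing lower bound uses the explicit formulas for the coefficients $a_{i,d}$ in Propositions \ref{thchi1}--\ref{thchi12}, together with the fact that $(-3)^{a/2}\neq 1$ for $a\geq 2$ even (resp.\ $(-3)^{(a-1)/2}$ is nonzero and does not conspire to cancel the other terms when $a\geq 3$ odd). This step is where the hypothesis $a>1$ is essential: for $a\in\{0,1\}$, the combinations in the propositions allow $A_{a,b}(e_2,e_3)$ to vanish, or permit $\sigma_{(a+b-2)/2}(\epsilon,\psi;n_{2,3})$ itself to vanish on infinite subsequences of $n_{2,3}$ via the character identities of Lemma \ref{chi-4}.

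Third, once the nonvanishing factorization is established, Lemma \ref{lem_11} applies directly to yield
\begin{equation*}
\lim_{n\to\infty}\left|\frac{\alpha_{8n+a+3b}}{n^{(a+b-1)/4}}\right|=\infty,
\end{equation*}
which is precisely the claim. The main obstacle will be the bookkeeping in the second step: one must show case by case (possibly subdividing into $e_2\leq 3$ and $e_2\geq 4$, and similarly for $e_3$, as was done in the proof of Theorem \ref{alp_rels_2}) that the leading coefficient $A_{a,b}(e_2,e_3)$ is bounded below in absolute value by a positive constant depending only on $a$ and $b$. Once this is in hand, combining with the lower bound of Lemma \ref{lem_10} and the bound $\omega(m)\ll \log m/\log\log m$ finishes the proof.
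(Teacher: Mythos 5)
Your outline reproduces the paper's own strategy almost verbatim: split into the four parity cases, use Propositions \ref{thchi1}--\ref{thchi12} together with the multiplicativity \eqref{multi} and Lemma \ref{lemma3_2} to factor $\alpha_{8n+a+3b}$ as a coefficient depending on $(e_2,e_3)$ times $2^{e_2(a+b-2)/2}3^{e_3(a+b-2)/2}\sigma_{(a+b-2)/2}(\epsilon,\psi;n_{2,3})$, and then invoke Lemma \ref{lem_11}. The paper does exactly this, producing the explicit expressions \eqref{est1}--\eqref{est8}.

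However, the step you flag as crucial --- that the factor $A_{a,b}(e_2,e_3)$ is bounded away from zero uniformly in $(e_2,e_3)$ whenever $a>1$ --- is false, and the condition you cite ($(-3)^{a/2}\neq 1$ for even $a\geq 2$) is not the relevant one. In Case 1 ($a,b$ even, $a+b=4k$) the coefficient in \eqref{est2} is, up to a positive constant, $\bigl((-3)^{a/2}(3^{2k-1}-1)+2\cdot 3^{2k-1}\bigr)-(3^{2k}-1)3^{-(2k-1)e_3}$; its limit as $e_3\to\infty$ is the bracketed integer, which vanishes precisely for $(a,b)=(2,2)$ (there it equals $-6+6=0$), so in that case $A_{2,2}(e_2,e_3)\asymp 3^{-e_3}\to 0$. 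Concretely, Proposition \ref{thchi1} gives for $(a,b)=(2,2)$ that
\begin{equation*}
\alpha_m=\sigma_1(m/8)-3\sigma_1(m/24)-\sigma_1(m/16)+3\sigma_1(m/48),
\end{equation*}
so for $m=8n+8=8\cdot 3^{e}$ one finds $\alpha_{8\cdot 3^e}=\sigma_1(3^e)-3\sigma_1(3^{e-1})=1$ for every $e\geq 1$ (consistent with $N^*(2,2;8\cdot 3^e)=16$ for all $e$), and hence $n^{3/4}/\alpha_{8n+8}\to\infty$ along $n=3^e-1$. So your argument cannot be completed as written for $(a,b)=(2,2)$ --- indeed the statement of the lemma itself fails for that pair. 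You should be aware that the paper's own proof has the same defect: it only verifies that \eqref{est1}--\eqref{est8} are nonzero when $a>1$ and then applies Lemma \ref{lem_11}, which silently presupposes exactly the uniform lower bound you tried to assert. For every other admissible pair with $a>1$ the limiting coefficient is a nonzero integer and one can check case by case that $|A_{a,b}(e_2,e_3)|$ is bounded below by a positive constant, so your plan, like the paper's, does go through there.
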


\begin{proof} Recall that $8n+a+3b=2^{e_2} 3^{e_3} n_{2,3} = \prod_{p \mid n} p^{e_p}$, where $\gcd(n_{2,3},6)=1$. We consider cases.

Case 1: Let $a,b \in \nn_0$ be such that $a+b=4k\geq 4$, and $a,b$ be both even. If $e_2=2$, then by employing Proposition \ref{thchi1},  {\eqref{multi}}, and Lemma \ref{lemma3_2} for $r=2$, we get
\bal
\frac{B_{2k}}{-4k}\alpha_{8n+a+3b} & = \frac{- \left(  \left( (-3)^{a/2} \left( 3^{2k-1} - 1 \right)+ 2 \cdot 3^{2k-1} \right)    -  {\left(  3^{2k} -1 \right)}/{(3^{(2k-1)e_3})} \right) }{2^{8k-3}(2^{2k}-1)(3^{2k}-1)(3^{2k-1}-1)}  \nonumber \\
& \quad \times 2^{(2k-1)e_2} 3^{(2k-1)e_3} \sigma_{2k-1}(\chi_1,\chi_1; n_{2,3}); \label{est1}
\nal
if $e_2=3$, then by employing Proposition \ref{thchi1}, \eqref{multi}, and Lemma \ref{lemma3_2} for $r=3$, we deduce
\bal
\frac{B_{2k}}{-4k}\alpha_{8n+a+3b} & =   \frac{ \left( (-3)^{a/2} \left( 3^{2k-1} - 1 \right)+ 2 \cdot 3^{2k-1} \right)    -  {\left(  3^{2k} -1 \right)}/ {3^{(2k-1)e_3}}  }{2^{8k-3}(2^{2k}-1)(3^{2k}-1)(3^{2k-1}-1)}    \nonumber \\
& \quad \times 2^{(2k-1)e_2} 3^{(2k-1)e_3}\sigma_{2k-1}(\chi_1,\chi_1; n_{2,3}); \label{est2}
\nal
if $e_2 \geq 4$,  then from \eqref{eq3_1} and \eqref{multi} we have again \eqref{est2}.

Case 2: Let $a,b \in \nn_0$ be such that $a+b=4k+2\geq 6$, and $a,b$ be both odd. If $e_2=2$, then by employing Proposition \ref{thchi3}, \eqref{multi}, and Lemma \ref{lemma3_2} for $r=2$, we get
\bal
\frac{B_{2k+1,\chi_{-3}}}{-(4k+2)} \alpha_{8n+a+3b} & =  \frac{ 1/3^{2ke_3} + (-3)^{(a-1)/2}  \chi_{-3}(n_{2,3}) }{ 2^{8k+1} ({2}^{2k+1}+1)}   \nonumber  \\
& \quad  \times 2^{2ke_2} 3^{2ke_3}\sigma_{2k}(\chi_{1},\chi_{-3};n_{2,3}); \label{est4}
\nal
if $e_2=3$, then by employing Proposition \ref{thchi3}, \eqref{multi}, and Lemma \ref{lemma3_2} for $r=3$, we deduce
\bal
\frac{B_{2k+1,\chi_{-3}}}{-(4k+2)} \alpha_{8n+a+3b} & = (-1)^{e_2+1} \frac{ 1/3^{2ke_3} + (-3)^{(a-1)/2}  \chi_{-3}(2^{e_2} n_{2,3}) }{  2^{8k+1} ({2}^{2k+1}+1)}  \nonumber  \\
& \quad  \times  2^{2ke_2} 3^{2ke_3} \sigma_{2k}(\chi_{1},\chi_{-3};n_{2,3}); \label{est6}
\nal
if $e_2 \geq 4$,  then, from Proposition \ref{thchi3},
\eqref{multi}, and Lemma \ref{chi-4} (ii), we have again \eqref{est6}.

Case 3: Let $a,b \in \nn_0$ be such that $a+b=4k+2\geq 6$, and $a,b$ be both even. Then, noting that $2 ~\Vert~ 8n+a+3b$, from Proposition \ref{thchi4}
and \eqref{multi} we have
\bal
\frac{B_{2k+1,\chi_{-4}}}{-(4k+2)} \alpha_{8n+a+3b} & =    \frac{  (-1)^{e_3} \left( -3^{2k} \left( (-3)^{a/2} + 2 \right) - (-3)^{a/2} \right) + (3^{2k+1} +1)/3^{2ke_3} }{  2^{6k+1} (3^{2k} + 1)(3^{2k+1}+1)}   \nonumber  \\
&  \quad \times  2^{2ke_2} 3^{2ke_3} \sigma_{2k}(\chi_{1},\chi_{-4};n_{2,3}). \label{est7}
\nal

Case 4: Let $a,b \in \nn_0$ be such that $a+b=4k\geq 4$, and $a,b$ be both odd. Then, noting that $2 ~\Vert~ 8n+a+3b$, from \eqref{eq3_9}, \eqref{multi}, and Lemma \ref{chi-4} (ii), we have
\bal
\frac{B_{2k,\chi_{-12}}}{-4k} \alpha_{8n+a+3b} & =   \frac{ 1/3^{(2k-1)e_3} -  (-1)^{e_3} (-3)^{(a-1)/2} \chi_{-3}(n_{2,3}) }{  2^{6k-2}}   \nonumber   \\
& \quad \times 2^{(2k-1)e_2} 3^{(2k-1)e_3} \sigma_{2k-1}( \chi_1,\chi_{12}; n_{2,3}). \label{est8}
\nal

Now we observe that 
\begin{itemize}
\item[] \eqref{est1}--\eqref{est2} is $0$ if only if $e_3=0$ and $a=0$; 
\item[] \eqref{est4}--\eqref{est6} is $0$ if only if $e_3=0$, $a=1$, and $2^{e_2} n_{2,3} \equiv 2 \pmod{3}$; 
\item[] \eqref{est7} is $0$ if only if $e_3=0$ and $a=0$; and 
\item[] \eqref{est8} is $0$ if only if $e_3=0$, $a=1$ and $n_{2,3} \equiv 1 \pmod{3}$. 
\end{itemize}
That is, \eqref{est1}--\eqref{est8} are non-zero whenever $a>1$. Now the result follows by applying Lemma \ref{lem_11} to \eqref{est1}--\eqref{est8}.
\end{proof}

\begin{proof}[Proof of {\rm Theorem \ref{mainth}}]
(iii) Let $\alpha_n$, $\beta_n$, $\gamma_n$, and $\gamma^\prime_n$ be as in \eqref{psiab} and \eqref{phiab}. Note that by \cite[p. 314]{cohenbook}, we have
\bal
\gamma_{n} = O_\epsilon(n^{(a+b-2)/4+\epsilon}) \mbox{ and } \gamma'_{n} = O_\epsilon(n^{(a+b-2)/4+\epsilon}), \label{eq44}
\nal
for any $\epsilon>0$.
Then when $a+3b \equiv 0 \pmod{4}$, by Lemma \ref{lem4_1}, \eqref{eq44} with $\epsilon=1/4$, and \eqref{alp_rels1}, we have
\bals
 \lim_{n \rightarrow \infty} \frac{N^*(a,b;8n+a+3b) }{\widetilde{N}(a,b;8n+a+3b)} &= \lim_{n \rightarrow \infty} \frac{2^{a+b} \left(\alpha_{8n+a+3b} + \gamma_{8n+a+3b}\right) }{\beta_{8n+a+3b} + \gamma'_{8n+a+3b}-\beta_{2n+(a+3b)/4} - \gamma'_{2n+(a+3b)/4}}\\
& \quad = \lim_{n \rightarrow \infty} \frac{\ds 2^{a+b} \left( \frac{\alpha_{8n+a+3b}}{\alpha_{8n+a+3b}} + O\left(\frac{n^{(a+b-1)/4}}{ \vert \alpha_{8n+a+3b} \vert} \right) \right) }{ \ds \frac{\beta_{8n+a+3b} -\beta_{2n+(a+3b)/4}}{\alpha_{8n+a+3b}} + O\left( \frac{n^{(a+b-1)/4}}{ \vert\alpha_{8n+a+3b} \vert}\right) }\\
& \quad =  \lim_{n \rightarrow \infty} \frac{ 2^{a+b} \alpha_{8n+a+3b} }{\beta_{8n+a+3b} -\beta_{2n+(a+3b)/4}}\\
& \quad = \frac{ 2 }{ 2^{a+b-2} + (-1)^{(a-b)/4}2^{(a+b-2)/2}}.
\nals

That is, we have \eqref{eq10_4} of Theorem \ref{mainth}. 

(i) The case when $a+3b \not\equiv 0 \pmod{8}$ can be proven similarly using Lemma \ref{lem4_1}, \eqref{eq44} with $\epsilon=1/4$, and \eqref{alp_rels2}.

(ii) The case when $a+3b \equiv 0 \pmod{8}$ can be proven similarly using Lemma \ref{lem4_1}, \eqref{eq44} with $\epsilon=1/4$, and \eqref{alp_rels4}.

\end{proof}


\begin{abib}

\bib{ACH}{article}{
   author={Adiga, Chandrashekar},
   author={Cooper, Shaun},
   author={Han, Jung Hun},
   title={A general relation between sums of squares and sums of triangular
   numbers},
   journal={Int. J. Number Theory},
   volume={1},
   date={2005},
   number={2},
   pages={175--182},
   issn={1793-0421},
   review={\MR{2173377}},
   doi={10.1142/S1793042105000078},
}

%

\bib{rmfpaper}{article}{
   author={Aygin, Zafer Selcuk},
   title={Extensions of Ramanujan-Mordell formula with coefficients 1 and
   $p$},
   journal={J. Math. Anal. Appl.},
   volume={465},
   date={2018},
   number={1},
   pages={690--702},
   issn={0022-247X},
   review={\MR{3806725}},
   doi={10.1016/j.jmaa.2018.05.033},
}

%

\bib{sqfreepaper}{article}{
   author={Aygin, Zafer Selcuk},
   title={On Eisenstein series in $M_{2k}(\Gamma_0(N))$ and their
   applications},
   journal={J. Number Theory},
   volume={195},
   date={2019},
   pages={358--375},
   issn={0022-314X},
   review={\MR{3867447}},
   doi={10.1016/j.jnt.2018.06.010},
}


\bib{projections}{article}{
   title={Projections of modular forms on Eisenstein series and its application to Siegel's formula}, 
      author={Aygin, Zafer Selcuk},
      year={2021},
      journal={preprint},
      eprint={https://arxiv.org/abs/2102.04278},
      archivePrefix={arXiv},
      primaryClass={math.NT}
   pages={1--30},
}

%
%

\bib{BCH}{article}{
   author={Barrucand, P.},
   author={Cooper, S.},
   author={Hirschhorn, M.},
   title={Relations between squares and triangles},
   journal={Discrete Math.},
   volume={248},
   date={2002},
   number={1-3},
   pages={245--247},
   issn={0012-365X},
   review={\MR{1892699}},
   doi={10.1016/S0012-365X(01)00344-2},
}

%

\bib{C4}{article}{
   author={Baruah, Nayandeep Deka},
   author={Cooper, Shaun},
   author={Hirschhorn, Michael},
   title={Sums of squares and sums of triangular numbers induced by
   partitions of 8},
   journal={Int. J. Number Theory},
   volume={4},
   date={2008},
   number={4},
   pages={525--538},
   issn={1793-0421},
   review={\MR{2441789}},
   doi={10.1142/S179304210800150X},
}

%
%

\bib{C7}{article}{
   author={Baruah, Nayandeep Deka},
   author={Kaur, Mandeep},
   author={Kim, Mingyu},
   author={Oh, Byeong-Kweon},
   title={Proofs of some conjectures of Z. -H. Sun on relations between sums
   of squares and sums of triangular numbers},
   journal={Indian J. Pure Appl. Math.},
   volume={51},
   date={2020},
   number={1},
   pages={11--38},
   issn={0019-5588},
   review={\MR{4076194}},
   doi={10.1007/s13226-020-0382-z},
}

%

\bib{C12}{article}{
   author={Bateman, Paul T.},
   author={Datskovsky, Boris A.},
   author={Knopp, Marvin I.},
   title={Sums of squares and the preservation of modularity under
   congruence restrictions},
   conference={
      title={Symbolic computation, number theory, special functions, physics
      and combinatorics},
      address={Gainesville, FL},
      date={1999},
   },
   book={
      series={Dev. Math.},
      volume={4},
      publisher={Kluwer Acad. Publ., Dordrecht},
   },
   date={2001},
   pages={59--71},
   review={\MR{1880079}},
   doi={10.1007/978-1-4613-0257-5\_4},
}

%

\bib{C10}{article}{
   author={Bateman, Paul T.},
   author={Knopp, Marvin I.},
   title={Some new old-fashioned modular identities},
   note={Paul Erd\H{o}s (1913--1996)},
   journal={Ramanujan J.},
   volume={2},
   date={1998},
   number={1-2},
   pages={247--269},
   issn={1382-4090},
   review={\MR{1642881}},
   doi={10.1023/A:1009782529605},
}

\bib{cohenbook}{book}{
   author={Cohen, Henri},
   author={Str\"{o}mberg, Fredrik},
   title={Modular forms},
   series={Graduate Studies in Mathematics},
   volume={179},
   note={A classical approach},
   publisher={American Mathematical Society, Providence, RI},
   date={2017},
   pages={xii+700},
   isbn={978-0-8218-4947-7},
   review={\MR{3675870}},
   doi={10.1090/gsm/179},
}

%

\bib{CH}{article}{
   author={Cooper, Shaun},
   author={Hirschhorn, Michael},
   title={A combinatorial proof of a result from number theory},
   journal={Integers},
   volume={4},
   date={2004},
   pages={A9, 4},
   issn={1553-1732},
   review={\MR{2079847}},
}

%

%


\bib{cooperbook}{book}{
   author={Cooper, Shaun},
   title={Ramanujan's theta functions},
   publisher={Springer, Cham},
   date={2017},
   pages={xviii+687},
   isbn={978-3-319-56171-4},
   isbn={978-3-319-56172-1},
   review={\MR{3675178}},
   doi={10.1007/978-3-319-56172-1},
}

%

\bib{cooperrmf}{article}{
   author={Cooper, Shaun},
   author={Kane, Ben},
   author={Ye, Dongxi},
   title={Analogues of the Ramanujan-Mordell theorem},
   journal={J. Math. Anal. Appl.},
   volume={446},
   date={2017},
   number={1},
   pages={568--579},
   issn={0022-247X},
   review={\MR{3554744}},
   doi={10.1016/j.jmaa.2016.09.004},
}

\bib{Kohler}{book}{
   author={K\"{o}hler, G\"{u}nter},
   title={Eta products and theta series identities},
   series={Springer Monographs in Mathematics},
   publisher={Springer, Heidelberg},
   date={2011},
   pages={xxii+621},
   isbn={978-3-642-16151-3},
   review={\MR{2766155}},
   doi={10.1007/978-3-642-16152-0},
}

%

%



\bib{ORW}{article}{
   author={Ono, Ken},
   author={Robins, Sinai},
   author={Wahl, Patrick T.},
   title={On the representation of integers as sums of triangular numbers},
   journal={Aequationes Math.},
   volume={50},
   date={1995},
   number={1-2},
   pages={73--94},
   issn={0001-9054},
   review={\MR{1336863}},
   doi={10.1007/BF01831114},
}

%
%

%

%

%

%
%

%

\bib{robin}{article}{
   author={Robin, Guy},
   title={Estimation de la fonction de Tchebychef $\theta $ sur le $k$-i\`eme
   nombre premier et grandes valeurs de la fonction $\omega (n)$ nombre de
   diviseurs premiers de $n$},
   language={French},
   journal={Acta Arith.},
   volume={42},
   date={1983},
   number={4},
   pages={367--389},
   issn={0065-1036},
   review={\MR{736719}},
   doi={10.4064/aa-42-4-367-389},
}

%

%

%

%

\bib{stein}{book}{
   author={Stein, William},
   title={Modular forms, a computational approach},
   series={Graduate Studies in Mathematics},
   volume={79},
   note={With an appendix by Paul E. Gunnells},
   publisher={American Mathematical Society, Providence, RI},
   date={2007},
   pages={xvi+268},
   isbn={978-0-8218-3960-7},
   isbn={0-8218-3960-8},
   review={\MR{2289048}},
   doi={10.1090/gsm/079},
}

%
%

\bib{SunPaper}{article}{
   author={Sun, Zhi-Hong},
   title={Some relations between $t(a,b,c,d;n)$ and $N(a,b,c,d;n)$},
   journal={Acta Arith.},
   volume={175},
   date={2016},
   number={3},
   pages={269--289},
   issn={0065-1036},
   review={\MR{3557125}},
   doi={10.4064/aa8418-5-2016},
}

%
%
%
%

\end{abib}

\end{document}